\theoremstyle{plain}
\newtheorem{thm}{Theorem}[section]
\theoremstyle{plain}
\newtheorem{lem}[thm]{Lemma}
\theoremstyle{definition}
\newtheorem{defi}{Definition}[section]
\newcommand{\bn}{\mathbb{B}^{N}}
\newcommand{\rn}{\mathbb{R}^{N}}
\newcommand{\rnn}{\mathbb{R}^{2}}
\newcommand{\hn}{\mathbb{H}^{N}}
\newcommand{\hnn}{\mathbb{H}^{2}}
\newcommand{\Chn}{C^{\infty}_c(\mathbb{H}^N)}
\newcommand{\authorfootnotes}{\renewcommand\thefootnote{\@fnsymbol\c@footnote}}%
\numberwithin{equation}{section} \allowdisplaybreaks
\begin{document}
\title{Adams Inequality on The Hyperbolic Space}
\author{Debabrata Karmakar and Kunnath Sandeep $^\dagger$}
\thanks{$^\dagger$ TIFR  Centre for Applicable Mathematics, Post Bag No. 6503
 Sharadanagar,\\Yelahanka New Town, Bangalore 560065.\\ Emails: debkar@math.tifrbng.res.in , sandeep@math.tifrbng.res.in}

\begin{abstract}
In this article we establish the following Adams type inequality in the Hyperbolic space $\hn$:
$$\sup_{u \in C^{\infty}_c(\hn),\int\limits_{\hn}(P_ku)u \ dv_g\leq 1} \int_{\hn} (e^{\beta u^2} - 1) \ dv_g < \infty $$
iff $\beta \leq \beta_0(N,k)$ where, $2k=N$, $P_k$ is the critical GJMS operator in $\hn$ and $\beta_0(N,k)$ is as defined in \eqref{beta0}. As an application we prove the asymptotic behaviour of the best constants in Sobolev inequalities when $2k=N$ and also prove some existence results for the $Q_k$ curvature type equation in $\hn$. 
\end{abstract}

\maketitle
MSC2010 Classification: \em{46E35, 26D10}\\
Keywords: \em{Adams inequality, Hyperbolic space}
\section{Introduction} The main focus of this article is on the optimal Adams inequality in space forms. This inequality was established in the zero curvature case $\mathbb{R}^N$ by D.R. Adams(\cite{A}) and in the constant positive sectional curvature case by Fontana (\cite{Fonta}). In this article we establish it in the case of Hyperbolic space. The inequality we prove (see Theorem \ref{HYADA}) is in view of the PDE which governs the critical $Q_{\frac N2}$ curvature under a conformal change of the metric.

Recall the Sobolev embedding theorem which states that if $\Omega$ is a bounded domain in $\mathbb{R}^N,$ then the Sobolev space $H^{k}_0(\Omega)$ is continuously embedded in $L^p(\Omega)$ for all $1\le p \le \frac{2N}{N-2k},$ if $2k <N$ and when $2k >N$, $H^{k}_0(\Omega)$ is continuously embedded in $C^{m,\alpha}(\Omega)$ where $m=k-\left[\frac N2\right]-1 $ and $\alpha =\left[\frac N2\right]+1 -\frac{N}{2}$ if $N$ is odd, otherwise $\alpha \in (0,1)$ is any arbitrary number. One can easily see that when $N=2k$, neither of the above embeddings are true.\\\\
 When $k=1$, an embedding for this case was obtained by  Poho\v{z}aev (\cite{Pohozaev}) and Trudinger(\cite{Trudinger}).\\
It is well known that the optimal Sobolev embedding play an important role in several geometric pdes, like the Yamabe problem, Prescribing the scalar curvature, etc. In 1971 J.Moser (\cite{Moser}) while trying to study the question of prescribing the Gaussian curvature on the sphere understood the need for establishing a sharp form of the embedding obtained by Poho\v{z}aev and Trudinger. He showed that there exists a positive constant $C_0$ depending only on $N$ such that
\begin{align} \label{Moser}
\sup_{u \in C^{\infty}_c(\Omega), \int_{\Omega} |\nabla u|^N \leq 1} 
\int_{\Omega} e^{\alpha |u|^{\frac{N}{N - 1}}} \ dx \leq C_0 |\Omega|,
\end{align}
holds for all $\alpha \le \alpha_N=N[\omega_{N-1}]^{\frac{1}{N-1}}$, where $\Omega$  is a bounded domain in $\rn,$ and $|\Omega|$ denotes the volume 
of $\Omega$ and $\omega_{N-1}$ is the $N-1$ dimensional measure of $S^{N-1}.$ Moreover when $\alpha > \alpha_N$, the above supremum is infinite.\\\\
In $1988,$ D.R. Adams(\cite{A}) established the sharp embedding in the case of higher order Sobolev spaces. He found the sharp constant $\beta_0$ for the higher order Trudinger-Moser type inequality. More precisely he proved that if $k$ is a positive integer less than $N,$ then there exists a constant $c_0 = c_0(k,N)$ such that 
\begin{align} \label{Adamsintegral}
 \sup_{u \in C^{k}_c(\Omega), \int_{\Omega} |\nabla^k u|^p \leq 1}\int_{\Omega} e^{\beta |u(x)|^{p^{\prime}}} \ dx \leq c_0 |\Omega|,
\end{align}
for all $\beta \leq \beta_0(k,N),$ where $p = \frac{N}{k},p^{\prime} = \frac{p}{p - 1} $,
\begin{align} \label{beta0}
 \beta_0(k,N) = 
 \begin{cases}
  \frac{N}{\omega_{N-1}} \left[\frac{\pi^{\frac{N}{2}}2^k \Gamma\left(\frac{k + 1}{2}\right)}
  {\Gamma \left(\frac{N - k + 1}{2}\right)}\right]^{p^{\prime}}, \ \ \ \mbox{if $k$ is odd}, \\ \ \
  \frac{N}{\omega_{N-1}} \left[\frac{\pi^{\frac{N}{2}}2^k \Gamma\left(\frac{k}{2}\right)}
  {\Gamma \left(\frac{N - k }{2}\right)}\right]^{p^{\prime} }, \ \ \ \mbox{if $k$ is even}, \\
  \end{cases}
\end{align}
and $ \nabla^k $ is defined by
 \begin{align}\label{higra}
  \nabla^k :=
  \begin{cases}
   \Delta^{\frac{k}{2}} , \ \ \ \ \ \ \ \mbox{if} \ k \ \mbox{is even}, \\
   \nabla \Delta^{\frac{k-1}{2}} , \ \ \mbox{if} \ k \ \mbox{is odd}.
  \end{cases}
\end{align}
Furthermore, if $\beta > \beta_0,$ then the supremum in \eqref{Adamsintegral} is infinite.\\\\
There has been many extensions and improvements of these two inequalities.
Extensions of \eqref{Moser} to functions belonging to $W^{1,N}(\rn)$ were obtained by various authors, see  Cao(\cite{Cao}), Panda(\cite{Panda}), J.M. do \'{O}(\cite{DoO}), Ruf(\cite{Ruf}), Li-Ruf(\cite{LiRuf}) and the references therein. Extension of the same to the case of $\Omega$ with infinite measure has been dealt with in \cite{MS} and \cite{BM}. A significant improvement of \eqref{Moser} was obtained by Adimurthi-Druet(\cite{AdiDruet}) in dimension $N=2$ and was extended to higher dimensions by Yang(\cite{Yang}). See also \cite{AdiSandeep}, \cite{Ode},\cite{AdiYang} for various other improvements. Like wise \eqref{Adamsintegral} has also attracted various generalizations and improvements. See Tarsi (\cite{tar}) for details.\\\\
In \cite{Moser} Moser also proved a sharp version of \eqref{Moser} on $S^2$, and subsequently  Fontana in (\cite{Fonta}) obtained the following  sharp version of \eqref{Adamsintegral} on compact Riemannian manifolds :\\
Let $(M,g)$ be an $N$ dimensional compact Riemannian manifold without boundary, and $k$  a positive integer less than $N,$ then there exists a constant $c_0 = c_0(k,M)$ such that
\begin{align} \label{Fontanaintegral}
 \sup_{u \in C^{k}(M),\int_Mu =0, \int_{M} |\nabla^k_g u|^p \leq 1}\int_{M} e^{\beta |u(x)|^{p^{\prime}}} \ dv_g \leq c_0,
\end{align}
if $\beta \leq \beta_0(k,N),$ where $p,p^{\prime}$ are as above and $\nabla_g^k $ is defined as in \eqref{higra} with $\nabla$ and $\Delta$  the gradient and Laplace Beltrami operators with respect to the metric $g.$ Furthermore, if $\beta > \beta_0,$ then the supremum in \eqref{Fontanaintegral} is infinite.\\\\
Our aim in this article is to study the Adams type inequality in the Hyperbolic space $\hn.$ More precisely we study the optimal embeddings of the Sobolev space $H^{k}(\hn)$ when $k$ is a positive
integer and $N = 2k.$ One of the main difficulty one faces in the full Hyperbolic space is due to its infinite measure or equivalently in coordinates the Hardy type singularity present in the integrals.\\\\
For $k = 1, N = 2,$ Mancini-Sandeep(\cite{MS}) proved the Trudinger-Moser inequality
in the hyperbolic space or in other words $H^{1}(\hnn)$ is embedded into the Zygmund space $Z_{\phi}$ determined by 
the function $ \phi = (e^{4\pi u^2} - 1).$ Another proof of this inequality was given by  Adimurthi-Tinterev(\cite{AdiT}). In fact in \cite{MS}, they obtained the following general theorem:\\
 Let $\mathbb{D}$ be the unit open disc in $\rnn,$ endowed with a conformal metric
$h = \rho g_e,$  where $g_e$ denotes the Euclidean metric and $\rho \in C^2(\mathbb{D}), \rho > 0,$ then
\begin{align} \label{TMH2}
 \sup_{u \in C^{\infty}_c(\mathbb{D}), \int_{\mathbb{D}} |\nabla_h u|^2 \leq 1} \int_{\mathbb{D}} 
                     \left(e^{4\pi u^2} - 1\right) \ dv_h < \infty,
\end{align}
holds true if and only if $h \leq c g_{\hnn}$ for some positive constant $c.$ Here $\nabla_h, dv_h$ denotes respectively the gradient and volume element for the metric $h$ and 
$g_{\hnn} = \displaystyle{\sum_{i = 1}^2} \left(\frac{2}{1 - |x|^2}\right)^2 dx^2_i$ is the Poincare metric in the disc. 

Extensions of this inequality to $N>2$ were obtained by Lu-Tang (\cite{LuT}) and \cite{BM}. 
The study of Trudinger-Moser type inequality on the hyperbolic space $\hn(N \geq 2)$ has been investigated by various 
authors for the past few years. For works related to sharp  Trudinger-Moser type inequality on the hyperbolic space
we refer to \cite{LuT}, \cite{MS}, \cite{MST}, \cite{Tin}, \cite{YZ} and the references therein. \\\\
One can think of Adams inequality in the hyperbolic space in various ways. However recall that the original motivation of Moser in establishing the sharp Moser-Trudinger inequality was to solve the question of prescribing the Gaussian curvature on the sphere $S^N$ by changing the metric conformally. In the same spirit one can consider the question of prescribing the optimal $Q_{\frac N2}$ curvature. More precisely if $(M,g)$
a Riemannian manifold of even dimension $N =2k$ with the  $Q_{\frac{N}{2}}$ curvature $Q_k$, let $\tilde g =e^{2u}g$ be a conformal metric on $M$, then the  $Q_{\frac{N}{2}}$ curvature $\tilde Q_k$ of $\tilde g$ and that of $g$ are related by the formula,
$$P_k(u) + Q_k = \tilde Q_k e^{Nu}, $$
where $k=\frac N2$ and $P_k$ is the critical GJMS operator on $(M,g)$. In view of this PDE and considering its variational structure, the right Adams inequality one should explore is the exponential integrability of $C_c^k$ functions with the constraint $\int\limits_{M}P_k(u)u\ dv_g \le 1$.    
We establish such an embedding in the case of Hyperbolic space. The main result of this article is the following:\\

\begin{thm}\label{HYADA}
Let $\hn$ be the $N$ dimensional hyperbolic space with $N$ even and $k =\frac N2$ then,
 \begin{align}
  \sup_{u \in \Chn , ||u||_{k,g} \leq 1} \ \int_{\hn} \left(e^{\beta u^2} - 1\right) \ dv_g < +\infty
 \end{align}
iff $\beta \leq \beta_0(k,N)$, where $\beta_0(k,N)$ is as defined in \eqref{beta0} and $||u||_{k,g}$ is the norm defined by
\begin{align} 
 ||u||_{k,g} := \left[\int_{\hn} (P_k u)u \ dv_g\right]^{\frac 12}, \ \mbox{for all} \ \ u \in \Chn,
\end{align}
where $P_k$ is the $2k$-th order GJMS operator on the hyperbolic space $\hn$.
\end{thm}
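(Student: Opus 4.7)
The plan is to mimic Adams' original scheme on $\rn$ -- represent $u$ via the Green's function of the relevant operator, symmetrize, and reduce to a one-dimensional estimate -- while using the conformal structure of $\hn$ to pin down the sharp constant. First I would pass to the ball model, identifying $\hn$ with $(\bn, g_{\mathbb{H}})$ where $g_{\mathbb{H}} = \rho^{2} g_{e}$ and $\rho(x) = 2/(1-|x|^{2})$. In the critical case $N = 2k$ the conformal covariance of the GJMS operator collapses to $P_{k}^{g_{\mathbb{H}}} u = \rho^{-N}(-\Delta_{e})^{k} u$ for $u \in \Chn$, which gives
\begin{equation*}
||u||_{k,g}^{2} = \int_{\bn} u \, (-\Delta_{e})^{k} u \, dx = \int_{\bn} |\nabla^{k} u|^{2} \, dx,
\end{equation*}
and the statement becomes a weighted Euclidean Adams problem:
\begin{equation*}
\sup\left\{ \int_{\bn} (e^{\beta u^{2}} - 1)\, \rho^{N} \, dx : u \in C_{c}^{\infty}(\bn),\ \int_{\bn}|\nabla^{k} u|^{2} \, dx \leq 1\right\} < \infty.
\end{equation*}
The boundary-singular weight $\rho^{N}$ encodes the infinite hyperbolic volume and is the genuine new difficulty compared with Adams' or Fontana's inequalities.

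For $\beta \leq \beta_{0}$ I plan to use a Green's function representation. The operator $P_{k}$ on $\hn$ factors as $\prod_{j=1}^{k}(-\Delta_{g} + c_{j})$ with explicit shifts $c_{j}$, and its Green's function $G_{k}(x,y)$ depends only on $d_{\mathbb{H}}(x,y)$, with a logarithmic singularity $\sim C \log(1/d_{\mathbb{H}}(x,y))$ near the diagonal and exponential decay at infinity coming from the positive spectral gap. Writing $u = G_{k} \ast_{\hn} P_{k} u$ and applying the hyperbolic symmetric-decreasing rearrangement to both $u$ and $P_{k} u$, a Riesz-type rearrangement inequality on the symmetric space $\hn$ should give a pointwise bound of the form $u^{*}(x) \leq \int_{\hn} G_{k}(x,y)\,(P_{k} u)^{*}(y)\,dv_{g}(y)$, reducing the problem to radial functions. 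In geodesic polar coordinates this becomes a one-dimensional inequality in $r$ with measure $\sinh^{N-1}(r)\,dr$, and a change of variables $t = -\log(\text{tail hyperbolic volume})$ reduces everything to the 1D Adams--Fontana estimate, the sharp constant $\beta_{0}$ arising from the Euclidean-type logarithmic singularity of $G_{k}$ near the diagonal. For the converse ($\beta > \beta_{0}$ forces an infinite supremum) I would insert Adams' Euclidean optimal Moser sequence concentrating at the origin of $\bn$: since $\hn$ is locally Euclidean at small scales and $\rho \geq 2$, the very test functions that defeat the Euclidean inequality defeat the hyperbolic one.

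The main obstacle will be the higher-order rearrangement step: the classical P\'olya--Szeg\H{o} inequality fails for the quadratic form $\int (P_{k} u)\, u \, dv_{g}$ when $k \geq 2$, so one cannot rearrange $||u||_{k,g}$ directly. The Green's function route avoids this, but requires sharp quantitative control of $G_{k}$ -- its radial monotonicity in $d_{\mathbb{H}}$, the precise leading log coefficient as $d_{\mathbb{H}} \to 0$, and the exponential tail -- together with an O'Neil-type rearrangement inequality on the symmetric space $\hn$, both of which must be set up carefully before the one-dimensional endgame can be executed.
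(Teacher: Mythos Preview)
Your conformal reduction to the weighted Euclidean problem on $\bn$ is exactly what the paper does, and your optimality argument via the concentrating Moser sequence matches the paper's as well. But for the main estimate you take a genuinely different route from the paper. The paper does \emph{not} touch Green's functions, rearrangement, or O'Neil; instead it runs a covering argument in the spirit of Adimurthi--Tintarev. One fixes nested open sets $\overline V\subset U\subset\overline U\subset\bn$, proves a crude local exponential bound on $V$ with some exponent $q>0$ (Lemma~4.2 plus an extension operator and the Euclidean Adams inequality), refines it to $\int_V(e^{qu^2}-1)\,dx\le C\,\|u\|^2_{H^k_g(U)}/(1-\|u\|^2_{H^k_g(U)})$ when the local hyperbolic Sobolev norm is strictly below $1$ (Lemma~4.4), and then covers $\bn$ by isometric translates $\tau_{b_i}(V)$ with bounded overlap. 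The indices split into a finite set where the local norm is large --- there the conformal identity $\|u\circ\tau_{b_i}\|_{k,g}=\|u\|_{k,g}\le 1$ lets one invoke the Euclidean Adams inequality directly at the sharp constant $\beta_0$ --- and the remaining indices, where the refined local bound sums to $C\|u\|^2_{H^k(\hn)}$. This approach buys simplicity: no kernel asymptotics, no rearrangement on a noncompact symmetric space, and the infinite-volume issue is absorbed entirely by the finite-multiplicity covering plus the $L^2$-type control on the ``small'' pieces.

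Your scheme is closer in spirit to Adams' original proof and to the Fontana--Morpurgo preprint the paper cites. It is plausible but, as you note, the heavy lifting is in the kernel analysis and the O'Neil step; one further wrinkle to watch is that the representation $u=G_k\ast_{\hn}P_ku$ does not by itself mesh with the constraint $\int (P_ku)u\le 1$, since that constraint controls $\|\nabla^k u\|_{L^2(\bn)}$ rather than any $L^p$ norm of $P_ku$. You would need either to work in the Euclidean picture with the Riesz potential and $f=\nabla^k u$, or to factor $P_k$ and convolve against a ``square root'' kernel whose $L^2$ source is exactly $\|u\|_{k,g}$.
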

See Section 3 for more details about GJMS operator and the above norm.\\\\ From the above inequality we will also derive the exact asymptotic behaviour of the best constants in the Sobolev inequality when $N=2k$, see Theorem \ref{estimate on Spk} for the precise statement. Also in Section 5.2, as an application of the above inequality we will discuss the solvability of the PDE which governs the $Q_{\frac N2}$ curvature in the hyperbolic space in its variational setting.\\

After this work was completed we came to know about the preprint \cite{FoM}, where an Adams inequality is established in the Hyperbolic space $\hn$ for all $N.$ Our inequality is different from the one established in \cite{FoM} and the proofs are different. Also when $N=4,6$ and $8$ we can show that our inequality is stronger than the one in \cite{FoM} and we believe it is true for all even $N$.

\section{Notations and Preliminaries}
\subsection{Notations}
 
For a bounded domain $\Omega$ in $\rn,$ we will denote by $H^{k}(\Omega),$ the usual Sobolev space, with respect to the norm,
\begin{align*}
 ||u||_{H^{k}(\Omega)} := \left(\sum_{|\alpha| \leq k} ||D^{\alpha} u||^2_{L^2(\Omega)}\right)^{\frac{1}{2}},
\end{align*}
where $\alpha$ is a multi-index, $\alpha = (\alpha_1, \alpha_2, ..., \alpha_N),\;\alpha_i \in \mathbb{N} \cup \{0\},$
$|\alpha| = \alpha_1 + \alpha_2 + ... + \alpha_N,$
\begin{align*}
 D^{\alpha}  := \frac{\partial^{|\alpha|}}{\partial x^{\alpha_1}_1 \partial x^{\alpha_2}_2 ... \partial x^{\alpha_N}_N} ,
\end{align*}
We will denote by $H^{k}_0(\Omega),$ the closure of $C^{\infty}_c(\Omega)$ in $H^{k}(\Omega).$ There are a few equivalent norms in $H^{k}_0(\Omega),$ we will collect a few of them in the next lemma, whose proofs are well known.
\begin{lem}\label{equieucli} 
Let $\Omega$ be a bounded open set in $\rn$ define $||u||_{k, \Omega}$ and $|||u|||_{k, \Omega}$ as
\begin{align}
 ||u||_{k, \Omega} := \left(\sum_{l = 0}^k ||\nabla^l u||^2_{L^2(\Omega)}\right)^{\frac{1}{2}},\; u \in H^{k}(\Omega)
\end{align}
\begin{align}
 |||u|||_{k, \Omega} := ||\nabla^k u||_{L^2(\Omega)}\; , u \in H^{k}(\Omega)
\end{align}
then $||u||_{k, \Omega}$ and $|||u|||_{k, \Omega}$ are equivalent norms in 
$H^{k}_0(\Omega)$. 
\end{lem}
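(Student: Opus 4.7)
The inequality $|||u|||_{k,\Omega} \le ||u||_{k,\Omega}$ is immediate from the definitions, since the right--hand side contains the left--hand side as one of its summands. Hence the task is the reverse estimate: one must exhibit a constant $C = C(k,\Omega)$ such that
\begin{align*}
 \sum_{l=0}^{k} \|\nabla^l u\|_{L^2(\Omega)}^{2} \le C \, \|\nabla^k u\|_{L^2(\Omega)}^{2}, \qquad u \in H^k_0(\Omega).
\end{align*}
Since $C^{\infty}_c(\Omega)$ is dense in $H^k_0(\Omega)$ by definition, it suffices to prove this a priori bound for $u \in C^{\infty}_c(\Omega)$ and then extend by continuity.

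The strategy is to establish the telescoping chain
\begin{align*}
 \|\nabla^l u\|_{L^2(\Omega)} \le C_l \, \|\nabla^{l+1} u\|_{L^2(\Omega)}, \qquad l = 0,1,\ldots,k-1,
\end{align*}
using the recursive description of $\nabla^l$ in \eqref{higra}, which alternates between $\Delta^{l/2}$ and $\nabla \Delta^{(l-1)/2}$. I treat the two types of transitions separately. \emph{Even $l \to $ odd $l+1$:} here $\nabla^l u = \Delta^{l/2} u \in C^{\infty}_c(\Omega) \subset H^1_0(\Omega)$, so the classical Poincar\'e inequality on the bounded domain $\Omega$ gives $\|\Delta^{l/2} u\|_{L^2(\Omega)} \le C\,\|\nabla \Delta^{l/2} u\|_{L^2(\Omega)} = C\,\|\nabla^{l+1} u\|_{L^2(\Omega)}$. \emph{Odd $l \to $ even $l+1$:} setting $v = \Delta^{(l-1)/2} u \in C^\infty_c(\Omega)$ and integrating by parts,
\begin{align*}
 \|\nabla v\|_{L^2(\Omega)}^{2} = -\int_{\Omega} v\, \Delta v \, dx \le \|v\|_{L^2(\Omega)}\, \|\Delta v\|_{L^2(\Omega)},
\end{align*}
which reads $\|\nabla^l u\|_{L^2(\Omega)}^{2} \le \|\nabla^{l-1} u\|_{L^2(\Omega)}\,\|\nabla^{l+1} u\|_{L^2(\Omega)}$. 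Combining with the even-to-odd bound already proved for $l-1$, one obtains $\|\nabla^l u\|_{L^2(\Omega)} \le C\,\|\nabla^{l+1} u\|_{L^2(\Omega)}$, closing the induction.

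Iterating gives $\|\nabla^l u\|_{L^2(\Omega)} \le C\,\|\nabla^k u\|_{L^2(\Omega)}$ for every $0 \le l \le k$; summing yields the desired equivalence on $C^{\infty}_c(\Omega)$, and density lifts it to all of $H^k_0(\Omega)$. I do not foresee any serious obstacle: the only analytic inputs are the Poincar\'e inequality (legitimate because $\Omega$ is bounded) and one integration by parts, both available on compactly supported smooth functions. The only point that requires a moment of care is the alternating structure of \eqref{higra}, which is precisely what forces the two-case induction above.
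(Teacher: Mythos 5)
Your argument is correct. The paper states this lemma without proof, calling it well known, and your telescoping chain --- Poincar\'e for the even-to-odd step, integration by parts plus Cauchy--Schwarz for the odd-to-even step, then density of $C^{\infty}_c(\Omega)$ --- is exactly the standard argument, and indeed the same strategy the authors themselves use to prove the hyperbolic analogue, Lemma \ref{higpoi}.
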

We will be using the following boundary Hardy-Rellich inequality for the polyharmonic operator established by M. Owen (see \cite{Owen}).
\begin{lem} \label{Owen}
 Let $\Omega$ be a bounded convex domain in $\rn,$ and $d(x) := d(x, \partial \Omega)$ be the distance from
 $x$ to the boundary of $\Omega$ then,
 \begin{align}
  A(k) \int_{\Omega} \frac{u^2}{d^{2k}(x)} \ dx \le  |||u|||_{k, \Omega}^2, \ \ 
  \mbox{for all} \ u \in C^{\infty}_{c}(\Omega),
 \end{align}
 where $A(k) = \frac{1^2 . 3^2 ... (2k - 1)^2}{4^k}$ and it is sharp.
\end{lem}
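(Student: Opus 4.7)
The plan is to reduce the stated inequality to a weighted Euclidean Adams estimate on the Poincar\'e ball $\bn$ via the conformal covariance of the critical GJMS operator, then tame the singular boundary weight using the polyharmonic Hardy--Rellich inequality of Lemma~\ref{Owen}. In the Poincar\'e ball model $g_{\hn}=\rho^2 g_e$ with $\rho(x)=2/(1-|x|^2)$, the critical conformal covariance $P_k^{\tilde g}=e^{-N\phi}P_k^g$ (valid precisely when $N=2k$, since the conformal weight $(N-2k)/2$ vanishes) combined with $dv_{g_{\hn}}=\rho^N\,dx$ yields, for any $u\in\Cbn$,
\begin{equation*}
\|u\|_{k,g}^2=\int_{\bn}(-\Delta)^k u\cdot u\,dx=|||u|||_{k,\bn}^2,\quad \int_{\hn}(e^{\beta u^2}-1)\,dv_g=\int_{\bn}(e^{\beta u^2}-1)\rho^{2k}\,dx.
\end{equation*}
The theorem is thus equivalent to the finiteness of $\sup\{\int_{\bn}(e^{\beta u^2}-1)\rho^{2k}\,dx : |||u|||_{k,\bn}\leq 1\}$ iff $\beta\leq\beta_0(k,N)$.

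For the necessity ($\beta>\beta_0$), I would take the standard Moser--Adams concentrating sequence $u_n\in\Cbn$ supported in a small Euclidean ball $B_r(x_0)$ around any fixed interior point $x_0\in\bn$. Since $\rho^{2k}$ is continuous and bounded below by some $c=c(x_0,r)>0$ on $B_r(x_0)$, the sharpness of the classical Adams inequality \eqref{Adamsintegral} immediately forces $\int(e^{\beta u_n^2}-1)\rho^{2k}\,dx\geq c\int(e^{\beta u_n^2}-1)\,dx\to+\infty$.

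For sufficiency ($\beta\leq\beta_0$), the bound $\rho(x)^{2k}\le 2^{2k}/d(x)^{2k}$ with $d(x)=1-|x|$ reduces the problem to estimating
\begin{equation*}
\int_{\bn}\frac{e^{\beta u^2}-1}{d(x)^{2k}}\,dx=\sum_{j\geq 1}\frac{\beta^j}{j!}\int_{\bn}\frac{u^{2j}}{d(x)^{2k}}\,dx.
\end{equation*}
The $j=1$ term is controlled by Lemma~\ref{Owen}: $\int u^2/d^{2k}\,dx\leq 1/A(k)$. For $j\geq 2$, the strategy is to interpolate: factor $u^{2j}/d^{2k}=(u^2/d^{2k})\,u^{2(j-1)}$ and apply H\"older's inequality, combining the Hardy--Rellich bound with $L^{p}$ bounds for $u$ obtained from the classical Adams inequality \eqref{Adamsintegral} on the bounded domain $\bn$ (Taylor-expanding $\int e^{\beta_0 u^2}\,dx\leq c_0|\bn|$ gives $\|u\|_{L^{2p}(\bn)}^{2p}\le c_0|\bn|\,p!/\beta_0^p$). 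Careful choice of the H\"older exponents $p=p(j)$ for each $j$ produces bounds that sum to a finite quantity whenever $\beta\leq\beta_0$.

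The main obstacle is the sharp quantitative bookkeeping in this last step: the constants $C_j$ accumulated from Hardy--Rellich and the Adams $L^{2p}$ bound grow factorially in $j$, and one must show they are dominated by $j!\,\beta^{-j}$ exactly at the critical threshold $\beta_0(k,N)$; any slight excess already breaks the summation, reflecting the optimality of the constant. An alternative route, closer to Adams' original proof, uses the Riesz-potential representation $u=I_k\ast((-\Delta)^{k/2}u)$ on $\bn$ together with O'Neil's rearrangement inequality, in which case Lemma~\ref{Owen} enters through a pointwise boundary-decay estimate that absorbs the singular factor $\rho^{2k}$ directly into the rearranged Riesz kernel, yielding the sharp constant without combinatorial bookkeeping.
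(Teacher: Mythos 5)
Your proposal does not address the statement you were asked to prove. The statement is Lemma~\ref{Owen}, the boundary Hardy--Rellich inequality of M.~Owen: for a bounded convex domain $\Omega\subset\rn$ and $d(x)=d(x,\partial\Omega)$, one has $A(k)\int_\Omega u^2/d^{2k}\,dx\le \|\nabla^k u\|_{L^2(\Omega)}^2$ for all $u\in C_c^\infty(\Omega)$, with the sharp constant $A(k)=\frac{1^2\cdot 3^2\cdots(2k-1)^2}{4^k}$. What you have written is instead a strategy for the paper's main result, Theorem~\ref{HYADA} (the Adams inequality on $\hn$), and in the course of that strategy you \emph{invoke} Lemma~\ref{Owen} as a known ingredient (``The $j=1$ term is controlled by Lemma~\ref{Owen}''). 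As a proof of Lemma~\ref{Owen} this is circular; as a response to the task it is simply a proof of a different theorem. For the record, the paper itself does not prove this lemma either --- it is quoted from Owen's article --- so the expected content here would be Owen's argument: reduce to the one-dimensional Hardy inequality in the direction normal to a supporting hyperplane (convexity gives $d(x)\le$ the distance to any supporting hyperplane), iterate the first-order boundary Hardy inequality $\tfrac14\int u^2/d^2\le\int|\nabla u|^2$ through the $k$ derivatives making up $\nabla^k$, and exhibit an extremizing sequence concentrating at the boundary to show $A(k)$ is sharp. None of these ideas appears in your text.

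Even judged as a sketch of Theorem~\ref{HYADA}, your outline diverges from what can actually be made to work and from the paper's method. The paper does not attempt a global weighted estimate $\int_{\bn}(e^{\beta u^2}-1)\rho^{2k}\,dx$ controlled term-by-term through H\"older interpolation against the Hardy--Rellich weight; you yourself concede that the ``sharp quantitative bookkeeping'' of factorially growing constants is the main obstacle and leave it unresolved --- that is exactly the step where such a direct summation breaks at the critical exponent. The paper instead covers $\hn$ by hyperbolic translates of a fixed compact set with bounded multiplicity (Lemma~\ref{basicl5}), proves a subcritical local exponential estimate with a quantitative remainder (Lemmas~\ref{basicl3} and \ref{basicl4}), and uses the M\"obius invariance of $\|\cdot\|_{k,g}$ to handle the finitely many ``bad'' pieces with the sharp Euclidean Adams inequality; Lemma~\ref{Owen} enters only once, in establishing the equivalence of norms \eqref{equivalent}.
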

{\bf Hyperbolic space :}
 The hyperbolic $N$-space is a complete, simply connected, noncompact Riemannian $N$-manifold having constant section
 curvature equals to $-1,$ and any two manifolds sharing above properties are isometric(see \cite{Wolf}).
We will denote the hyperbolic $N$-space by $\hn.$ \\\\
There are several models for the hyperbolic $N$-space
$\hn,$ commonly used are the ball model, the half space model, the Lorentz model. In this paper we will be using the ball model $(\bn, g_{\hn})$ where
$\bn := \{x = (x_1, x_2, ... ,x_N) \in \rn : (x^2_1 + x^2_2 + ... + x^2_N) < 1\}$ and $g_{\hn}$ is the Poincare metric given by
\begin{align} \label{metric}
 g_{\hn} = \sum_{i = 1}^N \left(\frac{2}{1 - |x|^2}\right)^2 dx^2_i,
\end{align}
From now on $\hn$ will stands for the conformal ball model, and we will simply write $g$ instead of $g_{\hn}$ to denote the metric
on $\hn.$\\\\
The volume element for $\hn$ is given by $dv_g = \left(\frac{2}{1 - |x|^2}\right)^N dx,$ where $dx$ denotes the Lebesgue measure on $\rn.$\\
Let $\nabla_g$ and $\Delta_g$ denotes respectively the hyperbolic gradient 
and Laplace-Beltrami operator, then in terms of local coordinates $\nabla_g$ and $\Delta_g$ takes the form :
\begin{align} \label{gradlaplacian}
 \nabla_g = \left(\frac{1 - |x|^2}{2}\right)^2\nabla \ , \ 
 \Delta_g = \left(\frac{1 - |x|^2}{2}\right)^2 \Delta + (N - 2)\left(\frac{1 - |x|^2}{2}\right) \langle x,\nabla \rangle,
\end{align}
where $\nabla, \Delta$ are the usual Euclidean gradient and Laplacian respectively, and $\langle.,.\rangle$ is the 
standard inner product in $\rn.$\\
Next we define the concept of Hyperbolic translation.
\begin{defi}[Hyperbolic Translation]
 For $b \in \bn $ we define the hyperbolic translation $\tau_{b}: \bn\rightarrow \bn$ by 
  \begin{align} \label{hyperbolictranslation}
  \tau_{b}(x) := \frac{(1 - |b|^2)x + (|x|^2 + 2\langle x,b \rangle + 1)b}{|b|^2|x|^2 + 2\langle x,b \rangle + 1}.
 \end{align}
\end{defi}
 Then $\tau_{b}: \bn\rightarrow \bn$ is an isometry, see (see \cite{Ratcliffe}, theorem 4.4.6) for details and further discussions on isometries. As a consequence we immediately have :
  \begin{lem} \label{lemma1}
  Let $\tau_b$ be the hyperbolic translation of $\bn$ by $b.$ Then,
  \item[(i).] For all $u \in \Chn,$ there holds,
  \begin{align*}
   \Delta_g (u \circ \tau_b) = (\Delta_g u) \circ \tau_b, \ \ \langle \nabla_g (u \circ \tau_b), \nabla_g (u \circ \tau_b)\rangle_g = \langle(\nabla_g u) \circ \tau_b, (\nabla_g u) \circ \tau_b\rangle_g.
  \end{align*}
  \item[(ii).]For any $u\in \Chn$ and open subset $U$ of $\bn$
  \begin{align*}
  \int_{U} |u \circ \tau_b|^p \ dv_g = \int_{\tau_b(U)} |u|^p \ dv_g, \ \mbox{for all} \ 1 \leq p < \infty.
 \end{align*}
  
\end{lem}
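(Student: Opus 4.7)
The plan is to deduce both parts of the lemma from the single fact, recalled just before the statement, that $\tau_b$ is an isometry of $(\bn, g)$. Each of the three claimed identities is an instance of a general Riemannian principle: the Laplace--Beltrami operator, the pointwise gradient norm, and the volume form are all determined by the metric alone, hence any isometry either commutes with them or preserves them.

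First I would verify the Laplacian identity using the invariant coordinate formula
\[
 \Delta_g u = \tfrac{1}{\sqrt{|g|}}\,\partial_i\!\bigl(\sqrt{|g|}\, g^{ij}\partial_j u\bigr).
\]
Applying the chain rule to $u\circ\tau_b$ and substituting, the condition $\tau_b^* g = g$ forces the components $g_{ij}$, $g^{ij}$ and $\sqrt{|g|}$ to transform covariantly under the change of variable $y = \tau_b(x)$, and term by term one obtains $\Delta_g(u\circ\tau_b)(x) = (\Delta_g u)(\tau_b(x))$. For the gradient identity, the chain rule gives $d(u\circ\tau_b)_x = (du)_{\tau_b(x)} \circ (d\tau_b)_x$, and since $(d\tau_b)_x : T_x\bn \to T_{\tau_b(x)}\bn$ is a linear isometry with respect to $g$, passing to metric duals and computing squared norms in $T_x\bn$ yields $\langle\nabla_g(u\circ\tau_b),\nabla_g(u\circ\tau_b)\rangle_g(x) = \langle\nabla_g u,\nabla_g u\rangle_g(\tau_b(x))$, which is the stated equality. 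For part (ii), the isometry property equivalently gives $\tau_b^* dv_g = dv_g$, so the usual change-of-variable formula with $y = \tau_b(x)$ immediately produces $\int_U |u\circ\tau_b|^p\,dv_g = \int_{\tau_b(U)} |u|^p\,dv_g$.

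I do not foresee any genuine obstacle; the argument is essentially bookkeeping, and the reference already provided for the isometry property does the only nontrivial work. The one concrete item one might still want to check, especially if one prefers to avoid the abstract invariance argument altogether, is $\tau_b^* g = g$ in coordinates using the explicit formula \eqref{hyperbolictranslation} together with \eqref{metric}; once that identity is in hand, all three assertions follow by direct substitution into \eqref{gradlaplacian} and into the volume-element formula $dv_g = \bigl(\tfrac{2}{1-|x|^2}\bigr)^N dx$.
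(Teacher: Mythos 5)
Your argument is correct and is exactly the route the paper intends: the lemma is stated there as an immediate consequence of $\tau_b$ being an isometry (with the reference to Ratcliffe), and the invariance of $\Delta_g$, of the gradient norm, and of $dv_g$ under isometries is precisely the standard bookkeeping you carry out. No discrepancy with the paper's (omitted) proof.
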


\subsection{The Sobolev space $H^k(\hn)$:} For a positive integer $l,$ let $\Delta^l_g$  denotes the $l$-th iterated 
Laplace-Beltrami operator. Define,
\begin{align} \label{iteratedgradient}
 \nabla^l_g :=
 \begin{cases}
  \Delta^{\frac{l}{2}}_g , \ \ \ \ \ \ \mbox{ if $l$ is even} \\
  \nabla_g \Delta^{\frac{l - 1}{2}}_g, \ \ \mbox{if $l$ is odd}.
 \end{cases}
\end{align}
\begin{defi}
We define the space $H^k(\hn)$ as the completion of $C_c^\infty(\hn)$ with respect to the norm
\begin{align}
 ||u||_{H^k(\hn)} :=  \left[\sum^k _{m = 0}\int_{\hn} 
 |\nabla^m_g u|^2_g  \ dv_g\right]^{\frac{1}{2}},
\end{align}
where  $|\nabla^l_g u|_g $ is given by,
 \begin{align*}
  |\nabla^l_g u|_g :=
  \begin{cases}
   |\nabla^l_g u|, \ \ \ \ \ \ \ \ \ \ \ \mbox{if $l$ is even}, \\
   \langle \nabla^l_g u, \nabla^l_g u \rangle^{\frac{1}{2}}_g ,\ \ \mbox{if $l$ is odd}. 
  \end{cases}
\end{align*}
\end{defi}

In $H^k(\hn)$ we have the following higher order Poincare type inequalities:
\begin{lem}\label{higpoi}
Let $k,l$ be non-negative integers such that $l<k$, then the inequality
$$ \left(\frac{N-1}{2}\right)^{2(k-l)}\int_{\hn}|\nabla^l_g u |^2_g \ dv_g\; \le \int_{\hn}|\nabla^k_g u |^2_g \ dv_g $$
holds for all $u\in H^k(\hn)$. As a consequence
\begin{align}
 |||u|||_{H^{k}(\hn)} :=\left[\int_{\hn}|\nabla^k_g u |^2_g \ dv_g\right]^{\frac{1}{2}}\; , u\in H^k(\hn) 
\end{align}
defines an equivalent norm in $H^k(\hn)$.
\end{lem}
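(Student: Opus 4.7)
The plan is to iterate the classical first-order Poincar\'e inequality on $\hn$,
\begin{equation*}
\rho^2 \int_{\hn} u^2\,dv_g \;\le\; \int_{\hn}|\nabla_g u|^2_g\,dv_g, \qquad \rho:=\frac{N-1}{2},
\end{equation*}
valid for every $u\in \Chn$ (equivalently, $\rho^2$ is the bottom of the $L^2$-spectrum of $-\Delta_g$, a standard fact, e.g.\ via radial rearrangement in the Poincar\'e ball). Together with the density of $\Chn$ in $H^k(\hn)$, this will drive the whole argument.

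The key intermediate identity is
\begin{equation*}
\int_{\hn}|\nabla_g^m u|^2_g\,dv_g \;=\; \int_{\hn} u\,(-\Delta_g)^m u\,dv_g, \qquad u\in\Chn,\ m\ge 1,
\end{equation*}
obtained by iterated integration by parts in both parities: for $m=2j$, $|\nabla_g^m u|^2_g=(\Delta_g^j u)^2$ and one shifts the $2j$ Laplacians onto $u$; for $m=2j+1$, $|\nabla_g^m u|^2_g=|\nabla_g\Delta_g^j u|^2_g=-\int \Delta_g^j u\cdot\Delta_g^{j+1}u\,dv_g$, which again reduces to $\int u(-\Delta_g)^m u\,dv_g$. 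With this identity the inequality we want becomes
\begin{equation*}
\rho^{2(k-l)}\int_{\hn} u\,(-\Delta_g)^l u\,dv_g \;\le\; \int_{\hn} u\,(-\Delta_g)^k u\,dv_g,
\end{equation*}
which is the operator-theoretic statement $(-\Delta_g)^k\ge \rho^{2(k-l)}(-\Delta_g)^l$, immediate from $-\Delta_g\ge \rho^2 I$ via the functional calculus.

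For readers who prefer to avoid functional calculus, one can unpack the same step inductively. To pass from $l=2j$ to $l+1=2j+1$, apply the base inequality to $w=\Delta_g^j u$, yielding $\rho^2\int(\Delta_g^j u)^2\le \int|\nabla_g\Delta_g^j u|^2_g$, i.e.\ $\rho^2\int|\nabla_g^l u|^2_g\le \int|\nabla_g^{l+1}u|^2_g$. To pass from $l=2j+1$ to $l+1=2j+2$, take the same $w$ and combine Cauchy--Schwarz with the base inequality,
\begin{equation*}
\int|\nabla_g w|^2_g \,=\, \int (-\Delta_g w)\,w\,\le\, \Bigl(\int(\Delta_g w)^2\Bigr)^{1/2}\Bigl(\int w^2\Bigr)^{1/2} \,\le\, \rho^{-1}\Bigl(\int(\Delta_g w)^2\Bigr)^{1/2}\Bigl(\int|\nabla_g w|^2_g\Bigr)^{1/2},
\end{equation*}
then rearrange to $\rho^2\int|\nabla_g^l u|^2_g\le \int|\nabla_g^{l+1}u|^2_g$. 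Iterating these two moves $(k-l)$ times proves the inequality on $\Chn$, and density extends it to $H^k(\hn)$. The norm equivalence then follows at once: the direction $|||u|||_{H^k(\hn)}\le ||u||_{H^k(\hn)}$ is trivial, and summing $\int|\nabla_g^m u|^2_g\le \rho^{-2(k-m)}\int|\nabla_g^k u|^2_g$ (including the $m=0$ case with $|\nabla_g^0 u|_g:=|u|$) over $0\le m\le k$ gives the reverse bound with constant $\sum_{m=0}^{k}\rho^{-2(k-m)}$. The only mild subtlety I anticipate is the odd-to-even step, which is why I spell out the Cauchy--Schwarz trick explicitly rather than only invoking the spectral calculus.
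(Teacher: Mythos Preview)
Your proof is correct and follows essentially the same route as the paper: start from the first-order Poincar\'e inequality $\rho^2\int u^2\le \int|\nabla_g u|^2_g$, and bootstrap one step at a time, handling the even-to-odd case by applying Poincar\'e to $\Delta_g^j u$ and the odd-to-even case via the Cauchy--Schwarz trick $\int|\nabla_g w|^2_g=\int(-\Delta_g w)w\le (\int(\Delta_g w)^2)^{1/2}(\int w^2)^{1/2}$ combined with Poincar\'e. The only extra ingredient you offer, the functional-calculus reformulation $(-\Delta_g)^k\ge \rho^{2(k-l)}(-\Delta_g)^l$, is a clean and valid alternative viewpoint that the paper does not invoke.
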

\begin{proof} We know from the Poincare inequality that
\begin{equation}\label{poi}
\left(\frac{N-1}{2}\right)^2\int_{\hn}|u |^2 \ dv_g\; \le \int_{\hn}|\nabla_g u |^2_g \ dv_g.
\end{equation}  
holds for all $u\in C_c^\infty(\hn)$. Now  
\begin{equation}\label{poi2}
\int_{\hn}|\nabla_g u |^2_g \ dv_g = \int_{\hn}(-\Delta_g u )u \ dv_g \le \left[\int_{\hn}(\Delta_g u )^2 \ dv_g\right]^{\frac 12} \left[\int_{\hn} u^2 \ dv_g \right]^{\frac 12}.
\end{equation} 
Combining this with the \eqref{poi} inequality gives gives 
$$ \left(\frac{N-1}{2}\right)^2\int_{\hn}|\nabla_g u |^2_g \ dv_g\; \le \int_{\hn}|\nabla^2_g u |^2_g \ dv_g .$$
Assume by induction
$$\int_{\hn}|u |^2 \ dv_g\; \le  A\int_{\hn}|\nabla_g u |^2_g \ dv_g 
\le A^2\int_{\hn}|\nabla^2_g u |^2_g \ dv_g  \le... \le A^{k}\int_{\hn}|\nabla^k_g u |^2_g \ dv_g ,$$
where $A = \left(\frac{N-1}{2}\right)^{-2}$. We claim that the above inequality extends to $k+1$.\\
Suppose $k$ is even, then by using the inequality \eqref{poi} to $\Delta^{\frac k2}_g u$ we get
$$\int_{\hn}|\nabla^{k}_g u |^2_g \ dv_g =\int_{\hn}|\Delta^{\frac k2}_g u |^2_g \ dv_g \le A\int_{\hn}|\nabla_g \Delta^{\frac k2}_gu |^2_g \ dv_g  = A\int_{\hn}|\nabla^{k+1}_g u |^2_g \ dv_g.$$
When $k$ is odd applying \eqref{poi2} to $\Delta^{\frac{k-1}{2}}u$ we get
$$\int_{\hn}|\nabla^{k}_g u |^2_g \ dv_g =\int_{\hn}|\nabla_g
\Delta^{\frac{k-1}{2}}_g u |^2_g \ dv_g \le A\int_{\hn}|\Delta_g \Delta^{\frac{k-1}{2}}_gu |^2_g \ dv_g  = A\int_{\hn}|\nabla^{k+1}_g u |^2_g \ dv_g.$$
This completes the induction argument and hence the lemma follows.
\end{proof}
\section{GJMS operator and a conformally equivalent norm } Let $(M,g)$ be a Riemannian manifold of dimension $N$. We know that the conformal Laplacian or the Yamabe operator $P_{1,g}$ defined by $$P_{1,g}= -\Delta_g +\frac{N-2}{4(N-1)}R_g, $$ where $R_g$ is the scalar curvature of the metric, is a conformally invariant differential operator in the sense that if $\tilde g = e^{2u}g$ is a conformal metric then  $$P_{1,\tilde g}(v) = e^{-(\frac N2 +1)u}P_{1,g}(e^{(\frac N2 -1)u} v),$$ for all smooth functions $v.$ A fourth order conformally invariant operator with leading term $\Delta_g^2$ was invented by Paneitz and later Branson found a conformal sixth order operator with leading term  $\Delta_g^3$. Existence of a general conformal operator of higher degree was obtained by Graham, Jenne, Mason and Sparling (\cite{GJMS}) what is popularly known as GJMS operators. 
It follows from their work that when $(M,g)$ is a Riemannian manifold of even dimension $N$ then for $k\in \{1,2,...,\frac N2\}$ there exists a conformally invariant differential operator $P_{k,g}$ of the form 
$P_{k,g} = \Delta_g^k +  lower\;order\; terms$, satisfying for a conformal metric  $\tilde g = e^{2u}g$, 
\begin{equation}\label{confrel}
P_{k,\tilde g}(v) = e^{-(\frac N2 +k)u}P_{k,g}(e^{(\frac N2 -k)u} v).
\end{equation}
When $N$ is even and $k>\frac N2,$ a conformally invariant operator $P_{k,g}$ with the above properties may not exist in general. For this reason $P_{\frac N2, g}$ is known as the critical GJMS operator.\\\\
We are going to use this operators to define a conformally invariant norm 
in the space $H^k(\hn)$ for $1\le k\le \frac N2.$\\
For the simplicity of notation we will denote the GJMS operator $P_{k,g}$ in the hyperbolic space by $P_k$. It is known that (see \cite{Liu},\cite{juhl}) $P_{k}$ has an explicit expression given by:
\begin{align} \label{GJMS}
 P_k := P_1 (P_1 + 2) (P_1 + 6)...(P_1 + k(k - 1)).
\end{align}
After expanding we may write \eqref{GJMS} as 
\begin{align} \label{expression for GJMS operator}
 P_k = (-1)^k \left[\Delta^k_g + \sum^{k - 1}_{m = 0}  a_{km} \Delta^m_g\right] ,
\end{align}
where $a_{km}$ are non-negative constants.\\
One can easily verify the following lemma :
\begin{lem} \label{properties of P_k}
Let $\tau$ be an isometry of $\ \hn$ and $U$ be an open subset of $\ \hn,$ and $u \in \Chn,$ then  
 \item[(i).] \begin{align*} P_k(u \circ \tau) =  P_k(u)\circ \tau
 \end{align*}
  \item[(ii).] 
 \begin{align*}
  \int_{U} P_k(u \circ \tau) (u \circ \tau) \ dv_g = \int_{\tau(U)} (P_k u) u \ dv_g. \\
 \end{align*}
\end{lem}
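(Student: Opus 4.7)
The plan is to reduce both statements to the single fact that the Laplace-Beltrami operator is natural under isometries, namely that $\Delta_g(u\circ\tau)=(\Delta_g u)\circ\tau$ for every isometry $\tau$ of $\hn$ and every $u\in\Chn$. This is standard: an isometry satisfies $\tau^* g = g$, and $\Delta_g$ is defined intrinsically from $g$ (via $\Delta_g u = \mathrm{div}_g\nabla_g u$), so its construction commutes with pullback by $\tau$. Once this is in hand, the lemma follows by routine manipulations.

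For part (i), I would iterate the naturality: $\Delta_g^m(u\circ\tau) = (\Delta_g^m u)\circ\tau$ for every non-negative integer $m$, proved by induction on $m$ (writing $\Delta_g^m(u\circ\tau) = \Delta_g(\Delta_g^{m-1}(u\circ\tau)) = \Delta_g((\Delta_g^{m-1}u)\circ\tau) = (\Delta_g^m u)\circ\tau$). Using the explicit expression
\[
P_k = (-1)^k\Bigl[\Delta_g^k + \sum_{m=0}^{k-1} a_{km}\,\Delta_g^m\Bigr]
\]
from \eqref{expression for GJMS operator} and the linearity of composition with $\tau$, identity (i) follows immediately term by term.

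For part (ii), once (i) is established we have the pointwise identity $P_k(u\circ\tau)\cdot(u\circ\tau) = \bigl((P_k u)\,u\bigr)\circ\tau$. Since $\tau$ is an isometry, it preserves the volume element $dv_g$, so the standard change of variables formula gives
\[
\int_{U} \bigl((P_k u)\,u\bigr)\circ\tau \; dv_g = \int_{\tau(U)} (P_k u)\,u \; dv_g,
\]
which is exactly the claim. The only slightly delicate point to check is that the computations remain valid on all of $\hn$ (not just in a coordinate chart), but this is taken care of by the intrinsic, coordinate-free definition of $P_k$ via iterated powers of $\Delta_g$ together with the fact that isometries of $\hn$ are globally defined smooth diffeomorphisms. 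In short, there is no genuine obstacle: the whole lemma is a formal consequence of the naturality of $\Delta_g$ under isometries applied to the explicit polynomial expression for $P_k$ in $\Delta_g$.
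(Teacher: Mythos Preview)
Your proposal is correct. The paper itself does not give a proof of this lemma; it simply introduces it with the sentence ``One can easily verify the following lemma.'' Your argument supplies exactly the verification the authors have in mind: it relies on the identity $\Delta_g(u\circ\tau)=(\Delta_g u)\circ\tau$ (which the paper records, for hyperbolic translations, in Lemma~\ref{lemma1}(i)), iterates it, and then applies it termwise to the explicit expression \eqref{expression for GJMS operator} for $P_k$; part (ii) is then the change-of-variables formula contained in Lemma~\ref{lemma1}(ii). So your approach coincides with what the paper indicates.
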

In the next lemma we will define a conformally invariant norm:\\
\begin{lem} Let $||u||_{k,g}$ be defined by
\begin{align} \label{norm}
 ||u||_{k,g} := \left[\int_{\hn} (P_k u)u \ dv_g\right]^{\frac 12}, \  \ u \in \Chn,
\end{align}
then $||.||_{k,g}$ defines a norm on $\Chn$. When $N=2k$ there exists a positive constant $\Theta$ such that,
\begin{align}\label{equivalent}
  \frac{1}{\Theta} ||u||_{k,g} \leq ||u||_{H^k(\hn)} \leq \Theta ||u_{k,g},
 \end{align}
for all $u \in \Chn.$ 
\end{lem}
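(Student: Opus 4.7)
\emph{Norm property.} First I show that $P_k$ is a positive self-adjoint operator on $L^2(\hn)$. From \eqref{GJMS}, $P_k = \prod_{j=1}^k (P_1 + j(j-1))$. In $\hn$ the scalar curvature is $R_g = -N(N-1)$, so the conformal Laplacian reduces to $P_1 = -\Delta_g - \frac{N(N-2)}{4}$. The bottom of the $L^2$-spectrum of $-\Delta_g$ equals $\lambda_0 := (N-1)^2/4$ (the classical Poincar\'e inequality \eqref{poi}), whence each factor obeys the operator inequality
\begin{equation*}
P_1 + j(j-1) \;\ge\; \lambda_0 - \tfrac{N(N-2)}{4} + j(j-1) \;=\; \tfrac{(2j-1)^2}{4} \;>\; 0.
\end{equation*}
Because the factors are commuting polynomials in $-\Delta_g$ and each is positive self-adjoint, so is $P_k$. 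Consequently $\|u\|_{k,g}^2 = \langle P_k u,u\rangle = \|P_k^{1/2}u\|_{L^2(\hn)}^2$ is the square of a genuine Hilbertian norm on $\Chn$.

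\emph{Equivalence of norms when $N=2k$.} In this case $c_j := \frac{N(N-2)}{4} - j(j-1) = (k-j)(k+j-1) \ge 0$, so $P_k = \prod_{j=1}^k (-\Delta_g - c_j)$. Employing the spectral resolution of $-\Delta_g$ on $L^2(\hn)$ and denoting by $\mu_u$ the spectral measure of $u$, the identity $\int_{\hn} |\nabla_g^m u|^2_g\, dv_g = \langle (-\Delta_g)^m u,u\rangle = \int_{\lambda_0}^{\infty} \lambda^m\, d\mu_u(\lambda)$ yields
\begin{equation*}
\|u\|_{k,g}^2 = \int_{\lambda_0}^{\infty} p(\lambda)\, d\mu_u(\lambda), \qquad \|u\|_{H^k(\hn)}^2 = \int_{\lambda_0}^{\infty} q(\lambda)\, d\mu_u(\lambda),
\end{equation*}
with $p(\lambda) := \prod_{j=1}^k (\lambda - c_j)$ and $q(\lambda) := 1 + \lambda + \cdots + \lambda^k$. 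On $[\lambda_0,\infty)$ both $p$ and $q$ are positive polynomials of degree $k$ with leading coefficient $1$; the continuous positive function $p/q$ tends to $1$ at infinity and takes a positive value at $\lambda_0$, so it is sandwiched between two positive constants. This delivers the two-sided bound $\frac{1}{\Theta}\|u\|_{k,g} \le \|u\|_{H^k(\hn)} \le \Theta\|u\|_{k,g}$.

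\emph{Main obstacle.} The hard direction is the lower bound $\|u\|_{H^k(\hn)} \le \Theta\|u\|_{k,g}$. A direct expansion via \eqref{expression for GJMS operator} yields $\|u\|_{k,g}^2 = \int |\nabla_g^k u|^2_g\, dv_g + \sum_{m=0}^{k-1}(-1)^{k+m} a_{km}\int |\nabla_g^m u|^2_g\, dv_g$, in which some sign-modified coefficients are negative, so $\|u\|_{H^k(\hn)}^2$ cannot be bounded from below by merely discarding terms. The spectral argument above circumvents this by exploiting the \emph{factorized} positivity of $P_k$: each individual factor $P_1 + j(j-1)$---not just the product---is a positive self-adjoint operator on $L^2(\hn)$. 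A route avoiding spectral calculus would be an induction on $k$ based on the factorization and the higher-order Poincar\'e inequalities of Lemma \ref{higpoi}, replacing $u$ at each step by $(P_1+j(j-1))^{1/2}u$; this works but the accounting is less transparent.
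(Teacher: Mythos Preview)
Your argument is correct, and it is genuinely different from the paper's. The paper never invokes the spectral theorem; instead it exploits the conformal relation \eqref{confrel} together with $P_{k,g_e}=\Delta^k$ to obtain the exact identity
\[
\int_{\hn}(P_k u)u\,dv_g=\int_{\bn}|\nabla^k v|^2\,dx,\qquad v(x)=\Bigl(\tfrac{2}{1-|x|^2}\Bigr)^{\frac N2-k}u(x),
\]
which for $N=2k$ becomes $\|u\|_{k,g}^2=\int_{\bn}|\nabla^k u|^2\,dx$. The hard inequality $\|u\|_{H^k(\hn)}\le\Theta\|u\|_{k,g}$ is then obtained by a coordinate estimate on $|\nabla_g^k u|_g^2$ in the ball model, Owen's boundary Hardy--Rellich inequality (Lemma~\ref{Owen}), and the equivalence of Euclidean $H^k_0$-norms. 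Your route---writing both $\|u\|_{k,g}^2$ and $\|u\|_{H^k(\hn)}^2$ as $\int p(\lambda)\,d\mu_u$ and $\int q(\lambda)\,d\mu_u$ for the spectral measure of $-\Delta_g$, and comparing the monic degree-$k$ polynomials $p,q$ on $[\lambda_0,\infty)$---is cleaner, intrinsic (no coordinates, no Owen), and even yields the explicit lower bound $P_k\ge\prod_{j=1}^k\frac{(2j-1)^2}{4}$ in any dimension.

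One point worth flagging: the paper's approach is not merely an alternative, because the identity $\|u\|_{k,g}^2=\int_{\bn}|\nabla^k u|^2\,dx$ is used \emph{again} in the proof of Theorem~\ref{HYADA} (see the estimate preceding the application of the Euclidean Adams inequality on $\bn$). Your spectral proof establishes the lemma but not this identity, so if you replace the paper's proof here you will still need to record that conformal identity separately before Section~4.3.
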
 
\begin{proof} First observe that the hyperbolic space is obtained from the 
Euclidean unit ball by changing the metric conformally as $e^{2\phi} g_e$ where $g_e$ is the Euclidean metric and $\phi = \log\left(\frac{2}{1-|x|^2} \right)$. Thus using the conformal relation \eqref{confrel} and using the fact that the $P_{k,g_e} = \Delta ^k$ we get
\begin{equation}
\int_{\hn} (P_k u)u \ dv_g =\int_{\bn}(\Delta^kv) v \; dx = \int_{\bn}|\nabla^kv|^2 \; dx
\end{equation}
where 
\begin{equation}\label{confchan}
v(x)= \left(\frac{2}{1-|x|^2} \right)^{(\frac{N}{2}-k)}u(x).
\end{equation}
From this relation one can easily see that \eqref{norm} defines a norm.\\
To prove the equivalence of norms when $N=2k$, first note that
 \begin{align*}
  ||u||_{k,g} \leq \left(\max_{m} \sqrt{a_{km}} \right) \ ||u||_{H^k(\hn)}.
 \end{align*}
To prove the reverse inequality, first observe that we have $v =u$ in \eqref{confchan} in this case and consequently
\begin{align}
 \int_{\hn} (P_k u)u \ dv_g = \int_{\bn} |\nabla^k u|^2 \ dx.
\end{align}
So, using Lemma \ref{higpoi} it is enough to show that, $\int_{\hn} |\nabla_g^k u|^2_g \ dv_g$ can be estimated by $\int_{\bn} |\nabla^k u|^2 \ dx.$
We can show by induction that,
\begin{align*}
 |\nabla^k_g u|^2_g \leq C\left[\left(\frac{1 - |x|^2}{2}\right)^{2k} |\nabla^k u|^2 
 + \sum_{|\alpha| \leq k - 1} \left(\frac{1 - |x|^2}{2}\right)^{2|\alpha|} |D^{\alpha} u|^2 \right],
\end{align*}
where $C$ is a positive constant independent of $u.$
Integrating this relation against the hyperbolic measure and using Lemma \ref{Owen} and Lemma \ref{equieucli}, we get
\begin{align} \label{equivalence of norms inequality 1}
 \int_{\hn} |\nabla_g^k u|^2_g \ dv_g &\leq C \left[\int_{\bn} |\nabla^{k}u|^2 \ dx
 + \sum_{|\alpha| \leq k - 1} \int_{\bn} \frac{|D^{\alpha} u|^2}{(1 - |x|)^{N - 2|\alpha|}} \ dx\right], \notag \\
 &\leq C\left[\int_{\bn} |\nabla^{k} u|^2 \ dx 
 + \sum_{|\alpha| \leq k - 1} |\nabla^{k - |\alpha|}(D^{\alpha} u)|^2 \ dx\right], \notag \\
 &\leq C ||u||^2_{H^{k}(\bn)} \leq C \int_{\bn} |\nabla^k u|^2 \ dx.
\end{align}
This completes the proof of the lemma.
\end{proof}
\section{Proof of Adams Inequality} We will prove the Adams inequality by proving a local inequality and then extend it to the entire space by a covering argument like in \cite{AdiT}. We need a few lemmas to implement this strategy and we will prove them in the next section. 
\subsection{Basic lemmas:} For an open set $U\subset \bn$ define
\begin{align*}
 ||u||_{H^k_g(U)} :=  \left[\sum_{m = 0}^{k} \int_{U} |\nabla^m_g u|^2_g  \ dv_g\right]^{\frac 12}\;, u \in C^{k}(\overline{U}) . 
\end{align*}
We need the following lemma which connects the above norm with that of the Euclidean Sobolev norm.
\begin{lem} \label{basicl2}
 Let $k$ be any positive integer, and $V,U$ be open sets such that $\overline{V} \subset U \subset \overline{U} \subset \bn$, then
 there exists a constant $C_0 > 0$ such that 
 \begin{align}
  ||u||_{H^{k}(V)} \leq C_0 ||u||_{H^k_g(U)}, \ \ \mbox{for all} \ u \in C^{k}(\overline{U}).
 \end{align}
\end{lem}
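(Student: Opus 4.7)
The plan is to reduce the claim to interior elliptic regularity on the Euclidean side, exploiting the fact that $\overline U$ is a compact subset of the open ball $\bn$. On such a compact set the conformal factor $\rho(x):=2/(1-|x|^2)$ and all its derivatives are smooth and bounded, and $\rho$ itself is bounded below by a positive constant. In particular the hyperbolic measure $dv_g=\rho^N\,dx$ is comparable to Lebesgue measure on $\overline U$, so one may freely pass between $L^2(U,dv_g)$ and $L^2(U,dx)$ throughout the argument.

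The first step would be to establish, by induction on $m$, the pointwise inequality
\begin{equation*}
|\nabla^m u|\;\leq\;C\Bigl(|\nabla_g^m u|_g+\sum_{|\alpha|\leq m-1}|D^\alpha u|\Bigr)\quad\text{on }\overline U,\qquad 0\leq m\leq k,
\end{equation*}
where $\nabla^m$ is the Euclidean operator from \eqref{higra}. This should follow directly from the local coordinate formulas \eqref{gradlaplacian}: $\Delta_g$ is a second-order operator with smooth bounded coefficients and principal symbol $\rho^{-2}|\xi|^2$ on $\overline U$, so its iterate $\Delta_g^j$ has principal part $\rho^{-2j}\Delta^j$ plus strictly lower-order Euclidean terms with bounded coefficients; combining with $\nabla_g=\rho^{-2}\nabla$ in the odd case gives the inequality after multiplying by an appropriate bounded power of $\rho$.

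Next I would invoke interior elliptic regularity for the iterated Laplacian. Fix a finite chain $V=V_0\Subset V_1\Subset\cdots\Subset V_k=U$ of open sets. Iterating the classical interior $H^2$ estimate for $\Delta$ yields, for $k=2j$,
\begin{equation*}
\|u\|_{H^k(V)}\leq C\bigl(\|\Delta^j u\|_{L^2(V_1)}+\|u\|_{L^2(V_1)}\bigr),
\end{equation*}
while in the odd case $k=2j+1$, applying the same estimate componentwise to $\partial_i u$ gives
\begin{equation*}
\|u\|_{H^k(V)}\leq C\bigl(\|\nabla\Delta^j u\|_{L^2(V_1)}+\|u\|_{H^1(V_1)}\bigr).
\end{equation*}
Combining this with the pointwise bound taken at $m=k$, together with the measure comparison, would reduce the claim to controlling $\|u\|_{H^{k-1}(V_1)}$; an induction on $k$, applied to the pair $V_1\Subset V_2$ at the next stage, closes the argument, the base case $k=0$ being immediate from measure comparison.

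The main obstacle is precisely the mismatch between the restricted family of Euclidean derivatives ($\Delta^{k/2}u$ or $\nabla\Delta^{(k-1)/2}u$) that enters $\|u\|_{H^k_g(U)}$ and the full family of mixed partials $D^\alpha u$ required by $\|u\|_{H^k(V)}$. Interior elliptic regularity is what bridges this gap, and this is the reason for demanding the strict inclusion $\overline V\subset U$ in the hypothesis: we need room both to invoke the regularity estimates and to iterate them along a chain of intermediate open sets.
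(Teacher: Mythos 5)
Your argument is correct and follows essentially the same route as the paper's proof: a pointwise/algebraic comparison showing that $\nabla^m_g u$ differs from $(\tfrac{1-|x|^2}{2})^m\nabla^m u$ only by lower-order Euclidean derivatives with smooth bounded coefficients on $\overline{U}$, then interior elliptic regularity for $\Delta^j$ (resp.\ applied to $\partial_i u$ in the odd case) to recover the full family of mixed partials, and finally an induction on the order along a chain of intermediate open sets to absorb the lower-order terms. The only cosmetic difference is that you use the sharp interior estimate with just the top-order term on the right, whereas the paper sums the comparison over all orders $l\le l_0$ and carries the intermediate iterated Laplacians explicitly; both versions close the same way.
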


\begin{proof}
Let $ V_1$ be an open set such that $\overline V \subset V_1 \subset \overline  V_1 \subset U.$
In the proof we will denote any universal constant by $C,$ and $C$ may change in every step.
By induction one can show that, for any even positive integer $l,$
 \begin{align} \label{basiclemma1}
  \nabla^l_g u = \left(\frac{1 - |x|^2}{2}\right)^l \nabla^l u + \sum_{|\alpha| \leq l - 1} a_{\alpha}(x)D^{\alpha} u ,
 \end{align}
where $a_{\alpha}$'s are smooth functions in $\bn.$ 

Therefore taking $\nabla_g$ on both sides of \eqref{basiclemma1} we get,
\begin{align} \label{basiclemma1 l odd}
 \nabla^{l + 1}_g u &= \left(\frac{1 - |x|^2}{2}\right)^{l+2} \nabla^{l+1} u + b_l(x) \nabla^{l} u \notag \\
 & + \sum_{|\alpha| \leq l - 1} \left[\left(\frac{1 - |x|^2}{2}\right)^2a_{\alpha}(x)\nabla(D^{\alpha} u)
 + b_{\alpha}(x) D^{\alpha} u \right], 
\end{align}
where $b_{\alpha}, b_l$'s are smooth vector valued functions defined on $\bn.$\\
Using the basic inequalities,
\begin{align*}
 (a + b)^2 &\geq (1 - \delta) a^2 - (\frac{1}{\delta} - 1)b^2, \ \ a,b \in \mathbb{R}, \delta\in (0,1)\\
 \left(\sum_{i = 1}^m a_i\right)^2 &\leq m \left(\sum_{i = 1}^m a^2_i\right), \ \ a_i \in \mathbb{R}, 
 \ \mbox{for all} \  i = 1,...,m
\end{align*}
and a simple estimation using \eqref{basiclemma1}, \eqref{basiclemma1 l odd}, leads to
\begin{align} \label{1}
 \int_{V_1} |\nabla^l_g u|^2_g \geq C_1(1 - \delta) \int_{V_1} |\nabla^l u|^2 - C(\delta) \sum_{|\alpha| \leq l-1} 
 \int_{V_1} |D^{\alpha} u|^2, 
\end{align}
for all $1 \leq l \leq k$ (here we used the fact that $a_{\alpha}, b_{\alpha}, b_l$ are smooth and $\left(1 - |x|^2\right)$ is bounded below and above by a positive constants on $V_1$) .\\ 
Now fix $1 < l_0 \leq k,$ then summing over $l = 1,2,...,l_0,$ we get from \eqref{1}
\begin{align} \label{3}
 \sum_{l = 0}^{l_0} \int_{V_1} |\nabla^l u|^2 &\leq C \sum_{l = 0}^{l_0} \int_{V_1} |\nabla^l_g u|^2_g  
  + C ||u||^2_{H^{l_0 - 1}(V_1)}, \notag \\
  & \leq C||u||^2_{H^{l_0}_g(U)} + C ||u||^2_{H^{l_0 - 1}(V_1)}.
\end{align}
Thus we have 

\begin{align} \label{4}
||u||^2_{l_0,V_1} \leq C ||u||^2_{H^{l_0}_g(U)} + C ||u||^2_{H^{l_0-1}(V_1)}.
\end{align}
Now we claim that there exists a constant $C>0$ such that
\begin{align} \label{intreg}
||u||_{H^{l_0}(V)} \le C\left[||u||_{l_0,V_1}+ ||u||_{H^{l_0-1}(V_1)}\right].
\end{align}
This follows directly from the interior elliptic regularity (see \cite{ADN}) when $l_0$ is even. When
$l_0$ is odd we can again use the interior elliptic regularity as follows to get \eqref{intreg}. 
In fact, when $l_0$ is odd, let $\alpha = (\alpha_1, \alpha_2,...,\alpha_N)$ be a multi index such that 
$|\alpha| = l_0.$ Let us assume $\alpha_i \neq 0,$ then by $H^{l_0-1}$ regularity,
\begin{align*}
 ||D^{\alpha}u||_{L^2(V)} & \leq ||\frac{\partial u}{\partial x_i}||_{H^{l_0 -1}(V)} 
                          \leq C ||\Delta^{\frac{l_0 - 1}{2}}\left(\frac{\partial u}{\partial x_i}\right)||_{L^2(V_1)} 
                          + C||\frac{\partial u}{\partial x_i}||_{L^{2}(V_1)} \\
                          &\leq C||\nabla \Delta^{\frac{l_0 - 1}{2}}u||_{L^2(V_1)} + C||u||_{H^{l_0 - 1}(V_1)} \\
                          &\le C\left[||u||_{l_0,V_1}+ ||u||_{H^{l_0-1}(V_1)}\right]
\end{align*}
and hence \eqref{intreg} follows. Now using \eqref{intreg} in \eqref{4} and $l_0\le k$ we get
\begin{align} \label{iteration}
||u||^2_{H^{l_0}(V)} \leq C ||u||^2_{H^{k}_g(U)} + C ||u||^2_{H^{l_0-1}(V_1)},
\end{align}
where $V_1$ is such that $\overline{V} \subset V_1 \subset \overline{V_1} \subset U.$ 
 Now starting with $l_0=k$ in \eqref{iteration}, and an iteration argument gives,
\begin{align}
 ||u||^2_{H^{k}(V)} &\leq C||u||^2_{H^{k}_g(U)} + C||u||^2_{H^{2}(V_2)}, \notag \\
                    &\leq C ||u||^2_{H^{k}_g(U)} + C||u||^2_{L^2(U)}
                    \leq C ||u||^2_{H^{k}_g(U)},
\end{align}
where $\overline{V} \subset V_2 \subset \overline{V_2} \subset U,$ and this  completes the proof of the Lemma.  
\end{proof}
Next we state a covering lemma whose proof we omit as it follows very much like the corresponding covering lemma in \cite{AdiT}(Lemma 3.3 and Corollary 3.4).

\begin{lem} \label{basicl5}
 Let $U,V$ be any open sets in $\bn$ such that, $\overline{V} \subset U \subset \overline{U} \subset \bn,$ then there
 exists a countable collection $\{b_i\}^{\infty}_{i = 1}$ of elements in $\bn,$ and  a positive number $M \in \mathbb{N},$
 such that, 
\item[(i).] $\{\tau_{b_i}(V)\}^{\infty}_{i = 1}$ covers $\bn$ with multiplicity not exceeding $M,$
\item[(ii).] $\{\tau_{b_i}(U)\}^{\infty}_{i = 1}$  have multiplicity not exceeding $M.$
\end{lem}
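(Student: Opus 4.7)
The strategy is a Vitali-type maximal packing argument in $\hn$, exploiting the homogeneity of the hyperbolic space: because its isometry group acts transitively, every hyperbolic ball of a given radius has the same (finite) volume, so a single uniform volume ratio will control multiplicities throughout the (infinite-volume) space.

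Here is the plan. Since $V$ is open and nonempty, fix $p_0 \in V$ and $r > 0$ with $\overline{B_g(p_0,r)} \subset V$; since $\overline U$ is a compact subset of $\bn$, one may also choose $R > r$ with $U \subset B_g(p_0, R)$. By Zorn's lemma, select a maximal collection $\{c_i\}_{i=1}^{\infty} \subset \hn$ such that the balls $B_g(c_i, r/2)$ are pairwise disjoint; maximality then forces the concentric balls $B_g(c_i, r)$ to cover $\hn$, since otherwise an uncovered point $x$ would allow adjoining $B_g(x, r/2)$ to the packing. Countability follows from $\sigma$-compactness of $\hn$ and the uniform positive volume of the packing balls. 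Choose $b_i \in \bn$ so that $\tau_{b_i}(p_0) = c_i$ (see the obstacle below). Then (i) is immediate: since $\tau_{b_i}$ is an isometry, $\tau_{b_i}(V) \supset \tau_{b_i}(B_g(p_0, r)) = B_g(c_i, r)$, and these already cover $\hn$. For the multiplicity (ii), $\tau_{b_i}(U) \subset B_g(c_i, R)$; if $x \in \tau_{b_i}(U)$ then $d_g(x, c_i) < R$, so $B_g(c_i, r/2) \subset B_g(x, R + r/2)$, and disjointness together with homogeneity (common ball volume $v_g(\rho)$ independent of center) yields
\[
\#\{i : x \in \tau_{b_i}(U)\} \leq \frac{v_g(R + r/2)}{v_g(r/2)} =: M,
\]
independent of $x$; since $V \subset U$, the same $M$ bounds the multiplicity in (i).

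The main obstacle is verifying that hyperbolic translations $\tau_b$ alone --- which form only the $N$-parameter subfamily of the full isometry group of $\hn$ missing the rotations --- suffice to re-center $p_0$ at every $c_i$, i.e., that the orbit map $f : \bn \to \bn$, $f(b) = \tau_b(p_0)$, is surjective. This is checked by noting that $f$ is smooth with $f(0) = p_0$ and $f(-p_0) = 0$ (a direct evaluation using \eqref{hyperbolictranslation}, in which the numerator collapses via the factor $1 - |p_0|^2$), and that its differential at $b = 0$ equals $(1+|p_0|^2)I - 2 p_0 p_0^T$, whose eigenvalues $1-|p_0|^2$ and $1+|p_0|^2$ are both positive; so $f$ is a local diffeomorphism, and combined with the fact that it extends continuously to $\overline{\bn}$ as the identity on $\partial \bn$, a topological degree argument delivers surjectivity. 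Alternatively, a harmless preliminary application of $\tau_{-p_0}$ reduces to the case $p_0 = 0$, where the choice $b_i = c_i$ works at once via $\tau_b(0) = b$.
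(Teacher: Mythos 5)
Your proof is correct, and it is essentially the standard argument: the paper itself omits the proof of this lemma, deferring to Lemma 3.3 and Corollary 3.4 of Adimurthi--Tintarev \cite{AdiT}, where the same scheme --- a maximal $r/2$-separated packing, the observation that maximality forces the doubled balls to cover, and a volume-comparison bound on the multiplicity using the homogeneity of $\hn$ --- is carried out. Your treatment of the one genuinely delicate point (that the translations $\tau_b$, rather than the full isometry group, act transitively) is sound: the degree argument using $f(b)=\tau_b(p_0)$, $f|_{\partial\bn}=\mathrm{id}$, does give surjectivity of the orbit map, and the computation of $Df(0)$ checks out. One caveat: your ``alternative'' reduction to $p_0=0$ by pre-composing with $\tau_{-p_0}$ does not literally prove the lemma as stated, because $\tau_{c_i}\circ\tau_{-p_0}$ is in general a translation composed with a rotation, not a translation $\tau_{b_i}$; so the covering sets produced that way are not of the required form $\tau_{b_i}(V)$. (This would still suffice for the application in Section 4, since Lemmas \ref{lemma1} and \ref{properties of P_k} hold for arbitrary isometries, but it is worth being precise.) Since your primary argument does not rely on that shortcut, the proof stands as written.
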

\subsection{Local inequalities} In this section we will establish the uniform exponential integrability in compact subsets of $\hn$.
\begin{lem} \label{basicl3}
 Let $U,V$ be as in lemma \eqref{basicl2}, then there exists a number $q > 0,$ such that,
 \begin{align}
  \displaystyle{\sup_{u \in C^{\infty}(\overline{U}), ||u||_{H^k_g(U)} \leq 1}} \int_V \left( e^{qu^2} - 1 \right ) \ dx \leq C_1 <
  \infty.
 \end{align}
\end{lem}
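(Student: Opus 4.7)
The plan is to localize to the Euclidean setting using Lemma~\ref{basicl2}, multiply by a smooth cutoff to obtain a compactly supported test function, and then invoke the classical Euclidean Adams inequality \eqref{Adamsintegral}. Because $N=2k$ in our setting, Adams' conjugate exponent is $p'=2$, which matches the $u^2$ appearing in the statement exactly, and the inequality scales in a way that tolerates replacing a unit-norm constraint by a uniform bound.

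More precisely, I would fix an intermediate open set $W$ with $\overline V\subset W\subset \overline W\subset U$ and a cutoff $\eta\in C_c^\infty(W)$ with $\eta\equiv 1$ on $V$ and $0\le \eta\le 1$. For any admissible $u\in C^\infty(\overline U)$ with $\|u\|_{H^k_g(U)}\le 1$, Lemma~\ref{basicl2} applied to the pair $(W,U)$ yields $\|u\|_{H^k(W)}\le C_0$ in the Euclidean sense. Setting $v:=\eta u\in C_c^\infty(W)$, a Leibniz expansion together with the boundedness of the derivatives of $\eta$ on $\overline W$ up to order $k$ gives
\begin{align*}
\|\nabla^k v\|_{L^2(\rn)}^2 \le C_\eta \sum_{|\alpha|\le k}\|D^\alpha u\|_{L^2(W)}^2 = C_\eta \|u\|_{H^k(W)}^2 \le C_\eta C_0^2 =: M.
\end{align*}

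Next I would apply the Euclidean Adams inequality \eqref{Adamsintegral} (with $p=N/k=2$, $p'=2$ and $\beta=\beta_0(k,N)$) to the normalized function $v/\sqrt{M}\in C_c^k(W)$, which satisfies $\|\nabla^k(v/\sqrt{M})\|_{L^2}\le 1$. This produces
\begin{align*}
\int_{W} e^{(\beta_0(k,N)/M)\,v^2}\,dx \le c_0\,|W|.
\end{align*}
Since $v\equiv u$ on $V\subset W$, restricting the integral to $V$ and subtracting $|V|$ gives the claimed bound with $q:=\beta_0(k,N)/M$ (or any smaller positive constant).

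I expect no serious obstacle. The only mildly delicate step is the Leibniz bound on $\|\nabla^k(\eta u)\|_{L^2}$, since $\nabla^k$ as defined in \eqref{higra} is $\Delta^{k/2}$ or $\nabla\Delta^{(k-1)/2}$ rather than the full jet; however, in both parity cases $\nabla^k$ is a constant-coefficient linear combination of $k$-th order partials, so the product rule controls $|\nabla^k(\eta u)|$ by a finite sum of products $|D^{\beta}\eta|\,|D^{\alpha-\beta}u|$ with $|\alpha|=k$, and the $L^2$ estimate follows from boundedness of the derivatives of $\eta$ on $\overline W$. The whole argument is an assembly of three ingredients: (a) Lemma~\ref{basicl2} to pass from the hyperbolic norm on $U$ to the Euclidean norm on $W$, (b) a smooth cutoff to manufacture a compactly supported test function on a bounded Euclidean domain, and (c) scale invariance of the Adams inequality, which works cleanly here precisely because $N=2k$ makes the exponent quadratic.
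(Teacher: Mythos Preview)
Your proof is correct and follows essentially the same strategy as the paper: control the Euclidean $H^k$ norm via Lemma~\ref{basicl2}, produce a compactly supported function agreeing with $u$ on $V$, and apply Adams' inequality \eqref{Adamsintegral} to the rescaled function. The only cosmetic difference is that the paper invokes an abstract extension operator $T:H^k(V)\to H^k_0(\bn)$ in place of your explicit cutoff $\eta$, but these play the same role and the resulting constants $q=\beta_0(k,N)/M$ coincide in form.
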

\begin{proof} 
 Let $T $ be an extension operator from $H^{k}(V)$ to $H^{k}_0(\bn)$. Then by Lemma \ref{basicl2},
 \begin{align}
  ||T(u)||_{H^{k}(\bn)} \leq C ||u||_{H^{k}(V)} \leq C_0 ||u||_{H^k_g(U)}, \ \ \mbox{for all} \
  u \in C^{\infty}(\overline{U}), 
 \end{align}
where $C,C_0$ are positive constants. In other words, there exists a constant $C_0>0$ satisfying
 \begin{align} \label{operator}
  \int_{\bn} |\nabla^k T(u)|^2 \ dx \leq C_0^2 ||u||^2_{H^k_g(U)},
   \ \mbox{for all} \ u \in C^{\infty}(\overline{U}) ,C_0>0.
 \end{align}
Therefore  for all $u \in C^{\infty}(\overline{U})$ with $||u||_U \leq 1,$ 
we have from \eqref{operator}, 
\begin{align}
 \int_{\bn} |\nabla^k \left(\frac{1}{C_0} T(u)\right)|^2 \ dx \leq 1.
\end{align}

Let us take $q = \frac{\beta_0(k,N)}{C_0^2},$ then by Adams inequality (see \cite{A}) , 
\begin{align}
\int_V \left(e^{qu^2} - 1 \right) \ dx 
&\leq \int_{\bn} \left(e^{\beta_0(k,N) \left(\frac{1}{C_0} T(u)\right)^2} - 1\right) \ dx 
\leq C_1, 
\end{align}
where $C_1$ is independent of  $T(u),$ and this completes the proof of the lemma.
\end{proof}
We also need the following refinement of the above Lemma:
\begin{lem} \label{basicl4}
 Let $U,V,q$ be as in Lemma \ref{basicl3}, then there exists a positive constant $C_2 > 0,$ such that for all
 $u \in C^{\infty}(\overline{U}),$ with $||u||_{H^k_g(U)} < 1,$ satisfies,
 \begin{align}
  \int_V \left( e^{qu^2} - 1 \right ) \ dx \leq C_2
  \frac{||u||^2_{H^k_g(U)}}{1 - ||u||^2_{H^k_g(U)}}.
 \end{align}
\end{lem}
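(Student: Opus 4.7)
The plan is to reduce to the unit-ball case of Lemma \ref{basicl3} by rescaling, and then exploit an elementary pointwise inequality that gives a quadratic gain in the small-norm parameter. Set $t := \|u\|_{H^k_g(U)} \in (0,1)$ and $v := u/t$, so that $\|v\|_{H^k_g(U)} = 1$ and Lemma \ref{basicl3} applies:
\begin{align*}
\int_V \bigl(e^{q v^2} - 1\bigr)\,dx \le C_1.
\end{align*}

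The key point is the elementary inequality $e^{a y} - 1 \le a(e^y - 1)$ for $0 \le a \le 1$ and $y \ge 0$. One can see this either from the Taylor expansion
\begin{align*}
e^{a y} - 1 = \sum_{j=1}^{\infty} \frac{(a y)^j}{j!} = \sum_{j=1}^{\infty} a^{j}\,\frac{y^j}{j!} \le a\sum_{j=1}^{\infty}\frac{y^j}{j!} = a(e^{y} - 1),
\end{align*}
using $a^j \le a$ when $j\ge 1$ and $a\in[0,1]$, or by noting that $f(a) := a(e^y-1) - (e^{ay}-1)$ satisfies $f(0)=f(1)=0$ and $f''(a) = -y^2 e^{ay}\le 0$, hence $f\ge 0$ on $[0,1]$.

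Applying this inequality pointwise with $a = t^2 \in (0,1)$ and $y = qv^2$ yields
\begin{align*}
e^{q u^2}-1 = e^{q t^2 v^2}-1 \le t^2 \bigl(e^{q v^2} - 1\bigr).
\end{align*}
Integrating over $V$ and using Lemma \ref{basicl3} gives
\begin{align*}
\int_V \bigl(e^{q u^2}-1\bigr)\,dx \le C_1\, t^2 = C_1\,\|u\|^2_{H^k_g(U)}.
\end{align*}
Since $1-t^2 \le 1$, this bound is in particular dominated by $C_1\,\|u\|^2_{H^k_g(U)}/(1-\|u\|^2_{H^k_g(U)})$, so the statement of the lemma follows with $C_2 = C_1$.

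There is no real obstacle here: the only nontrivial input is the scaling invariance of the problem together with the elementary convexity inequality, and Lemma \ref{basicl3} does all the heavy lifting. Note that the proof actually delivers the slightly stronger estimate $C_1\|u\|^2_{H^k_g(U)}$; the weaker form stated in the lemma is presumably the shape most convenient for the later covering/concentration argument.
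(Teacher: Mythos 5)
Your proof is correct and follows essentially the same route as the paper: normalize $u$ by its norm so that Lemma \ref{basicl3} applies, then use the Taylor expansion of the exponential to extract the factor $\|u\|^2_{H^k_g(U)}$. The only difference is bookkeeping — the paper bounds each Taylor coefficient separately by $C_1\|u\|^{2l}_{H^k_g(U)}$ and sums the geometric series (which is where the denominator $1-\|u\|^2_{H^k_g(U)}$ comes from), whereas your pointwise inequality $e^{ay}-1\le a(e^y-1)$ sums first and yields the slightly sharper bound $C_1\|u\|^2_{H^k_g(U)}$, which of course implies the stated one.
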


\begin{proof}
 Let $ u \in C^{\infty}(\overline{U})$ with $||u||_{H^k_g(U)} < 1,$ then applying Lemma \ref{basicl3} to $\frac{u}{||u||_{H^k_g(U)}},$
 we get, for all $l \geq 1,$
 \begin{align}
 \frac{1}{||u||^{2l}_{H^k_g(U)}} \int_V q^l \frac{ u^{2l}}{l!} &\leq
 \int_V \left(e^{q\frac{u^2}{||u||^2_{H^k_g(U)}}} - 1\right) \ dx  \leq C_1. \ \ \
\end{align}
This implies,
\begin{align} \label{5}
 \int_V q^l \frac{ u^{2l}}{l!} &\leq C_1 ||u||^{2l}_{H^k_g(U)}, \ \ \mbox{for all} \ l \geq 1.
\end{align}
Now summing over all $l \geq 1,$ and using $||u||_{H^k_g(U)} < 1,$ we get from \eqref{5},
\begin{align}
 \int_V \left( e^{qu^2} - 1 \right ) \ dx \leq C_2
  \frac{||u||^2_{H^k_g(U)}}{1 - ||u||^2_{H^k_g(U)}},
\end{align}
and proves the lemma.
\end{proof}

\subsection{Proof of Theorem \ref{HYADA} } Fix two open sets $V,U$ as in Lemma \ref{basicl2}. Then by Lemma \ref{basicl5}, there exists a countable collection $\{b_i\}^{\infty}_{i = 1}
\subset \bn$ and a positive integer $M_0,$ such that,
\begin{align*}
 \bn = \bigcup^{\infty}_{i = 1} \tau_{b_i}(V) = \bigcup^{\infty}_{i = 1} \tau_{b_i}(U), 
\end{align*}
and $\{\tau_{b_i}(U)\}^{\infty}_{i = 1}$ have multiplicity less than $M_0.$\\ 
Let $u \in C_c^\infty(\hn)$ be such that $||u||_{k,g} \leq 1.$ 
Let us define the set,
\begin{align}
 I_u := \left\{i \in \mathbb{N} : ||u \circ \tau_{b_i}||^2_{H^k_g(U)} \geq \frac{q}{2 \beta_0(k,N)} \right\},
\end{align}
where $q$ is defined as in Lemma \ref{basicl3}. Let $card(A)$ denotes the cardinality of a set $A.$ Then we 
claim that, \\
\textbf{claim:} $card(I_u) \leq \alpha_0,$ and $\alpha_0$ is independent of $u.$ \\
\emph{Proof of the claim :}
Let us denote by $U_i:=\tau_{b_i}(U) $ then, using the fact that the covering $\{U_i\}$ has multiplicity at most $M_0$ and \eqref{equivalent} we get
\begin{align*}
 \frac{q}{2 \beta_0(k,N)} card(I_u) &\leq \sum^{\infty}_{i = 1} ||u \circ \tau_{b_i}||^2_{H^k_g(U)}
                                   \leq \sum^{\infty}_{i = 1} ||u||^2_{H^k_g(U_i)}
                                    \leq M_0 ||u||^2_{H^{k}(\hn)} \le C ,
\end{align*}
where $C$ is independent of $u$, this proves the claim.\\\\
If $j \in \mathbb{N} \setminus I_u$, then 
$||\sqrt{\frac{\beta_0(k,N)}{q} }(u \circ \tau_{b_j})||_ {H^k_g(U)}< \frac{1}{2}.$ Applying Lemma \ref{basicl4} to
$v :=\sqrt{\frac{\beta_0(k,N)}{q} }(u \circ \tau_{b_j})$ we get,
\begin{align}
 \int_{\tau_{b_j}(V)} \left( e^{\beta_0(k,N)u^2} - 1 \right) \ dv_g & \leq
  \int_V \left( e^{\beta_0(k,N)(u \circ \tau_{b_j})^2} - 1 \right) \ dv_g, \notag \\
  &\leq C \int_V \left( e^{\beta_0(k,N)(u \circ \tau_{b_j})^2} - 1 \right) \ dx, \notag \\
  &\leq C \int_V \left(e^{qv^2} - 1 \right) \ dx, \notag \\
  & \leq C ||v||^2_{H^k_g(U)} \leq C ||u \circ \tau_{b_j}||^2_{H^k_g(U)} \leq C ||u||^2_{H^k_g(U_j)}.
\end{align}
Adding these relations we get we get,
\begin{align}
 \sum_{i \in \mathbb{N} \setminus I_u} \int_{\tau_{b_i}(V)} \left( e^{\beta_0(k,N)u^2} - 1\right) \ dv_g 
 & \leq C \sum_{i \in \mathbb{N} \setminus I_u} ||u||^2_{H^k_g(U_i)} \leq M_0 ||u||^2_{H^{k}(\hn)} \le C .  
\end{align}
Where $C$ is independent of $u.$
Now if $i \in I_u$ then,
\begin{align} \label{interiorestimate}
 \int_{\tau_{b_i}(V)} \left( e^{\beta_0(k,N)u^2} - 1\right) \ dv_g
 & = \int_{V} \left( e^{\beta_0(k,N)(u \circ \tau_{b_i})^2} - 1\right) \ dv_g, \notag \\
 &\leq C \int_{V} \left( e^{\beta_0(k,N)(u \circ \tau_{b_i})^2} - 1\right) \ dx, \notag \\
 &\leq C \int_{\bn} \left( e^{\beta_0(k,N)(u \circ \tau_{b_i})^2} - 1\right) \ dx. 
\end{align}
Now
\begin{align*}
 \int_{\bn} |\nabla^k (u \circ \tau_{b_i})|^2 \ dx = ||u \circ \tau_{b_i}||^2_{k,g} = ||u||^2_{k,g} \leq 1.
\end{align*}
Therefore using the Euclidean Adam's inequality \eqref{Adamsintegral} in  \eqref{interiorestimate}, we get,
\begin{align*}
 \int_{\tau_{b_i}(V)} \left( e^{\beta_0(k,N)u^2} - 1\right) \ dv_g \leq C, \ \ \mbox{for all} \ i \in I_u.
\end{align*}
Adding over such finitely many $i$'s we get,
\begin{align}
 \sum_{i \in I_u} \int_{\tau_{b_i}(V)} \left( e^{\beta_0(k,N)u^2} - 1\right) \ dv_g \leq C(\alpha_0 + 1).
\end{align}
Since $\{\tau_{b_i}(V)\}^{\infty}_{i = 1}$ covers $\bn,$ we get,
\begin{align*}
 \int_{\hn} \left(e^{\beta_0(k,N)u^2} - 1\right) \ dv_g \leq C,
\end{align*}
where $C$ is independent of $u.$ \\\\
To complete the proof we have to show that $\beta_0$ is optimal. For this purpose define for $m\in \mathbb{N}$,
\begin{align*}
  v_m = 
 \begin{cases}
  \sqrt{\frac{\log m}{2 M}} + \frac{1}{\sqrt{2M \log m}} \sum_{l = 1}^{k - 1} \frac{(1 - m|x|^2)^{l}}{l}
  , \ \ \ \mbox{if} \ 0 \leq |x| < \frac{1}{\sqrt{m}}, \\
  - \sqrt{\frac{2}{M \log m}} \log |x|, \ \ \ \qquad \qquad \qquad \ \   \ \mbox{if} \ \frac{1}{\sqrt{m}} \leq |x| < 1, \\
  \xi_m(x), \ \ \ \qquad \qquad \qquad \ \ \qquad \qquad \ \ \  \ \mbox{if} \ |x| > 1.
 \end{cases}
\end{align*}
where $M = \frac{(4 \pi)^k (k - 1)!}{2},$ and $\xi_m$'s are radial functions chosen so that,
\begin{align*}
 \xi_m \in C^{\infty}(\overline{B_2(0)}), \ \ \ \ \ \xi_m |_{\partial B_1(0)} = \xi_m |_{\partial B_2(0)} = 0.
\end{align*}
In addition we assume for $l = 1,2, ..., k-1,$
\begin{align*}
 \frac{\partial^l \xi_m}{\partial r^l}  |_{\partial B_1(0)}  = (- 1)^l (l - 1)! \sqrt{\frac{2}{M \log m}}, \ \ \ \
 \frac{\partial^l \xi_m}{\partial r^l}  |_{\partial B_2(0)}  = 0,
\end{align*}
and $ \xi_m, |\nabla^l \xi_m|, |\nabla^k \xi_m|$ are all $O\left(\frac{1}{\sqrt{\log m}}\right).$

By direct computations we can see that $v_m \in H^{k}_0(B_2(0))$  for all $m,$ and,
\begin{align*}
 \int_{B_2(0)} |\nabla^k v_m|^2 \ dx = 1 + O\left(\frac{1}{\log m}\right).
\end{align*}
as $m\rightarrow \infty$. See \cite{CZ} for details.\\
For our case we take $\tilde u_m (x) = v_m(2x),$ then it is easy to see 
that $\tilde u_m \in H^{k}(\hn),$ for all $m,$ and
\begin{align}\label{generalmoser}
 ||\tilde u_m||^2_{k,g} = 1 + O\left(\frac{1}{\log m}\right)
\end{align}
as $m\rightarrow \infty$.\\
Define $u_m = \frac{\tilde u_m}{||\tilde u_m||_{k,g}},$
and let $ \beta > \beta_0(k,N),$ then we have,
\begin{align} \label{blow up example}
 \int_{\hn} (e^{\beta u_m^2} - 1) \ dv_g &\geq \int\limits_{\{|x| < \frac{1}{\sqrt{m}}\}} 
 (C m^{\frac{\beta}{2M}} - 1) \ dx 
 \geq \frac{\omega_{N - 1}(C m^{\frac{\beta}{2M}} - 1) }{N m^k}\\
&\geq \frac{\omega_{N - 1}}{N} (C m^{\frac{\beta}{2M} - k} - m^{-k} )
\end{align}
It is easy to see that $\frac{\beta}{2M} > k,$ when $ \beta > \beta_0(k,N),$ and therefore the right hand 
side of \eqref{blow up example} tends to infinity as $m$ approaches to infinity. This completes the proof of the theorem.

 \section{Applications to PDE} In this section we will give two applications of the Adams inequality we proved. The first application will be the asymptotic estimates on the best constant in the Sobolev embedding when $N=2k$ and as a second application we will study certain PDEs in hyperbolic space motivated by the $Q_{\frac N2}$ curvature equation.
\subsection{Asymptotic estimates on best constants.} It is known from the work of G.Liu (see \cite{Liu}) that
 when $N > 2k,$  the Sobolev space  $H^{k}(\hn)$ is embedded in $L^q(\hn),$ where $q = \frac{2N}{N - 2k}.$ He proved the following sharp inequality :
 \begin{align} \label{Liu}
  \left(\int_{\hn} |u|^q \ dv_g \right)^{\frac{2}{q}} \leq \Lambda_k ||u||_{k,g}^2,\; u \in C^{\infty}_c(\hn)
 \end{align}
where $q = \frac{2N}{N - 2k}$ and $\Lambda_k$ is the best constant in this this inequality and is given by
\begin{align*}
 \Lambda_k = \frac{2^{2k} \omega^{-\frac{2k}{N}}_N}{N[N - 2k][N^2 - (2(k - 1))^2][N^2 - (2(k - 2))^2]...[N^2 - 2^2]} \ .
\end{align*}
When $N = 2k,$ clearly the exponent $q$ becomes infinity but one can easily see that $H^{k}(\hn)$ does not embeds in to $L^\infty.$  However it follows from the Adam's inequality (Theorem \ref{HYADA}) that the inequality 
 \begin{align}
  S_{k,p}\left[\int_{\hn} |u|^p \ dv_g \right]^{\frac{2}{p}} \leq ||u||_{k,g}^2 \; ,\, u \in H^{k}(\hn)
 \end{align}
 holds for all $p \geq 2$, with the best constant $S_{p,k}>0.$ Clearly $S_{p,k}\rightarrow 0$ as $p\rightarrow \infty$. We prove a precise asymptotic estimate for $S_{p,k}$ as $p$ goes to infinity.
 
 \begin{thm} \label{estimate on Spk}
  Let $k$ be a positive integer and $N = 2k.$ Then,
  \begin{align}
   S_{p,k} := \inf_{u \in \Chn, u \neq 0} \frac{||u||_{k,g}^2}{\left[\int_{\hn} |u|^p \ dv_g \right]^{\frac{2}{p}}} 
   = \frac{2 \beta_0(k,N) e + o(1)}{p},
  \end{align}
  as $p \rightarrow \infty.$
\end{thm}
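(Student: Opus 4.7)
I will prove matching asymptotic upper and lower bounds. The lower bound $S_{p,k}\ge(2\beta_0(k,N)e+o(1))/p$ will follow from the Adams inequality of Theorem \ref{HYADA} combined with a Taylor expansion of the exponential and Stirling's formula. The matching upper bound will be produced by plugging the Moser-type test sequence $\tilde u_m$ already constructed in the optimality part of Theorem \ref{HYADA} into the Rayleigh quotient, with the choice $m=\exp(p/(2k))$.

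\textbf{Lower bound.} Take $u\in\Chn$ with $\|u\|_{k,g}\le 1$ and set $n:=\lceil p/2\rceil$. The elementary inequality $t^n/n!\le e^t-1$ for $t\ge 0$ and $n\in\mathbb{N}$, applied with $t=\beta_0(k,N)u^2$, gives $\beta_0(k,N)^n u^{2n}\le n!\,(e^{\beta_0(k,N)u^2}-1)$. I split $\int_{\hn}|u|^p\,dv_g$ according to $|u|\ge 1$ (where $|u|^p\le u^{2n}$) and $|u|<1$ (where $|u|^p\le u^2$, whose integral is controlled by $\|u\|_{k,g}^2$ via Lemma \ref{higpoi} and the norm equivalence \eqref{equivalent}). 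Invoking Theorem \ref{HYADA} on the first piece yields
\[
\beta_0(k,N)^n \int_{\hn}|u|^p\,dv_g \;\le\; n!\,C_1+C_2\,\beta_0(k,N)^n,
\]
with constants $C_1,C_2$ independent of $u$ and $p$. Taking $p$-th roots and using the Stirling-type asymptotics $(n!)^{2/p}=(p/(2e))(1+o(1))$ and $\beta_0(k,N)^{2n/p}=\beta_0(k,N)(1+o(1))$ (for $n=\lceil p/2\rceil$ as $p\to\infty$), I obtain $\|u\|_{L^p(\hn)}^2\le (p/(2\beta_0(k,N)e))(1+o(1))$, and hence $S_{p,k}\ge(2\beta_0(k,N)e/p)(1+o(1))$.

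\textbf{Upper bound.} Choose $m=\exp(p/(2k))$ and use the function $\tilde u_m(x)=v_m(2x)$ from the optimality proof of Theorem \ref{HYADA}, together with the normalisation \eqref{generalmoser}, which gives $\|\tilde u_m\|_{k,g}^2=1+O(1/\log m)=1+O(1/p)$, and the identity $2Mk=\beta_0(k,N)$. On the plateau $\{|x|\le 1/(2\sqrt m)\}$ the nonnegativity of the correction term $(\sqrt{2M\log m})^{-1}\sum_{l=1}^{k-1}(1-m|2x|^2)^l/l$ in the defining formula of $v_m$ forces $\tilde u_m\ge\sqrt{(\log m)/(2M)}$, so $\tilde u_m^2\ge (\log m)/(2M)=p/(2\beta_0(k,N))$ throughout this plateau. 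Near the origin $dv_g=(2/(1-|x|^2))^N\,dx=2^N(1+o(1))\,dx$, so the plateau has hyperbolic volume $(\omega_{N-1}/N)m^{-k}(1+o(1))=(\omega_{N-1}/N)e^{-p/2}(1+o(1))$, yielding
\[
\int_{\hn}|\tilde u_m|^p\,dv_g \;\ge\; \frac{\omega_{N-1}}{N}\Bigl(\frac{p}{2\beta_0(k,N)e}\Bigr)^{p/2}(1+o(1)).
\]
Taking $(2/p)$-th powers absorbs the prefactor $\omega_{N-1}/N$ into $1+O(1/p)$ and gives $\|\tilde u_m\|_{L^p(\hn)}^2\ge (p/(2\beta_0(k,N)e))(1+o(1))$. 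Dividing by $\|\tilde u_m\|_{k,g}^2=1+O(1/p)$ produces $S_{p,k}\le(2\beta_0(k,N)e/p)(1+o(1))$, matching the lower bound. (The use of $\tilde u_m$, which lies in $H^k(\hn)$ rather than $\Chn$, is justified by the density of $\Chn$ in $H^k(\hn)$.)

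\textbf{Where the work is.} Neither bound is genuinely hard. The only points requiring attention are (i) the Stirling asymptotic for $(\lceil p/2\rceil!)^{2/p}$, which is standard; (ii) the correction terms in the definition of $v_m$ (the inner sum and the outer cut-off $\xi_m$), which are already known from \eqref{generalmoser} not to affect the leading behaviour of $\|\tilde u_m\|_{k,g}^2$, and which on the plateau only increase $\tilde u_m$; and (iii) the observation that the conformal factor $(2/(1-|x|^2))^N$ is nearly $2^N$ on the shrinking plateau near the origin, so the passage from Lebesgue to hyperbolic measure costs only a multiplicative $1+o(1)$.
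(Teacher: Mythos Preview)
Your proof is correct and follows essentially the same route as the paper: the lower bound comes from the Adams inequality (Theorem \ref{HYADA}) via the Taylor term $t^n/n!\le e^t-1$ and Stirling, while the upper bound is obtained by inserting the Moser sequence $\tilde u_m$ with the coupling $p=2k\log m$. The only tactical difference is in handling non-integer exponents in the lower bound: the paper interpolates by H\"older between $L^{2n}$ and $L^{2(n+1)}$, whereas you take $n=\lceil p/2\rceil$ and split the integral over $\{|u|\ge1\}$ (where $|u|^p\le u^{2n}$) and $\{|u|<1\}$ (where $|u|^p\le u^2$); both devices produce the same asymptotic.
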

\begin{proof}
 For simplicity of the notations we will write $\beta_0$ for $\beta_0(k,N).$ Let $u \in \Chn$ with
 $||u||_{k,g}\leq 1.$ Then by Adams inequality there exists a constant $C,$ independent of $u,$
 such that,
 \begin{align*}
  \int_{\hn} (e^{\beta_0 u^2} - 1) \ dv_g \leq C.
 \end{align*}
 Then for all positive integer $p$ we have,
 \begin{align}
  \frac{\beta_0^p}{p!} \int_{\hn} |u|^{2p} \ dv_g \leq \int_{\hn} (e^{\beta_0 u^2} - 1) \ dv_g \leq C.
 \end{align}
Therefore for all $u \in \Chn,$ we have,
\begin{align}
 \left(\int_{\hn} |u|^{2p} \ dv_g \right)^{\frac{1}{2p}} \leq \frac{C^{\frac{1}{2p}}(p!)^{\frac{1}{2p}}}{\beta_0^{\frac{1}{2}}}
 ||u||_{k,g}.
\end{align}
For general $p,$ let $n$ be the positive integer such that $n \leq p \leq n + 1.$ Then setting 
$\alpha = \frac{n(n + 1 - p)}{p},$ we have,
\begin{align} \label{lower bound on Spk}
 \left(\int_{\hn} |u|^{2p} \ dv_g \right)^{\frac{1}{2p}} &\leq
 \left(\int_{\hn} |u|^{2n} \ dv_g \right)^{\frac{\alpha}{2n}}
 \left(\int_{\hn} |u|^{2(n+1)} \ dv_g \right)^{\frac{1 - \alpha}{2(n+1)}} \notag \\
 &\leq \frac{C^{\frac{1}{2p}}(n!)^{\frac{1}{2p}}(n + 1)^{\frac{(1 - \alpha)}{2(n + 1)}}}{\beta_0^{\frac{1}{2}}}
 ||u||_{k,g}.
\end{align}
Therefore we have from \eqref{lower bound on Spk} and Stirling formula,
\begin{align*}
 2p S_{2p,k} \geq \frac{2 \beta_0 p}{C^{\frac{1}{p}} (n!)^{\frac{1}{p}}(n + 1)^{1 - \frac{n}{p}}}
   \geq 2 \beta_0 e + o(1).
\end{align*}
This gives,
\begin{align}
 \liminf_{p \rightarrow \infty}\ pS_{p,k} \geq 2 \beta_0 e.
\end{align}
To prove the opposite inequality, consider the sequence of functions
$\tilde u_m$ defined in \eqref{generalmoser}, then 
\begin{align*}
 \int_{\hn} |\tilde u_m|^{p} \ dv_g &\geq C \int_{\{|x| < \frac{1}{\sqrt{m}}\}} |v_m(x)|^p \ dx ,\\
                             &\geq C \left(\frac{\log m}{2 M}\right)^{\frac{p}{2}} \left(\frac{1}{m}\right)^{\frac{N}{2}}.   
\end{align*}
Choose $p$ such that $2k \log m = p,$ then $p$ goes to infinity as $m$ goes to infinity. We see that for
such choice of $p,$ using \eqref{generalmoser}
\begin{align}
 S_{p,k} \leq \frac{||\tilde u_m||_{k,g}^2}{\left[ \int_{\hn} |\tilde u_m|^{p} \ dv_g \right]^{\frac{2}{p}}}
 \leq \frac{2 \beta_0 e}{p} \frac{[1 + O(\frac{1}{\log m})]}{C^{\frac{2}{p}}}.
\end{align}
This gives, 
\begin{align*}
 \limsup_{p \rightarrow \infty}\  pS_{p,k} \leq 2 \beta_0 e,
\end{align*}
and the proof is complete.
\end{proof}
\subsection{Applications to Geometric PDE}
 In this section we will study a semi-linear elliptic PDE, motivated by the    $Q_{\frac{N}{2}}$-curvature problem.\\
Let $(M,g)$ be a Riemannian manifold of even dimension $N$. For integers $k< \frac{N}{2}$ we have the notion of $Q_k$ curvature given by $Q_k =\frac{2(-1)^k}{N-2k}P_k(1)$ and the notion can be extended using analytic continuation to define $Q_{\frac{N}{2}}$ curvature of the manifold (see \cite{Branson} for details). Let $\tilde g= e^{2u}g$ be a conformal metric on $(M,g)$, then the  $Q_{\frac{N}{2}}$ curvatures $Q_{\frac{N}{2},g},Q_{\frac{N}{2},\tilde{g}}$ of $g$ and $\tilde{g}$ are related by $ P_{\frac N2,g}(u) + Q_{\frac{N}{2},g} = Q_{\frac{N}{2},\tilde{g}}e^{Nu}$, where $P_{\frac N2,g}$ is the critical GJMS operator as defined in Section 3.\\\\ Motivated by this equation we investigate the following PDE in Hyperbolic space :
 \begin{align*} 
  P_k(u) + Q_1 = Q_2 e^{2u},
 \end{align*}
where $Q_1, Q_2$ are real valued functions defined on $\hn$ and $N=2k$. Note that the $Q_{\frac{N}{2}}$ curvature equation can be reduced to this equation by taking $v= \frac{N}{2}u$.\\
We prove,
\begin{thm} \label{existence result thm 1}
 Let $Q_1, Q_2 \in L^2(\hn)$ then the equation 
 \begin{align} \label{Qk curvature 2}
  P_k(u) + Q_1 = Q_2 e^{2u},
 \end{align}
  has a solution in $H^{k}(\hn) + \mathbb{R}.$
\end{thm}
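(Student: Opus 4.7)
The approach is variational. Writing $u = v + c$ with $v \in H^k(\hn)$ and $c \in \mathbb{R}$, the PDE reduces to
\[
P_k v + Q_1 = \lambda\, Q_2\, e^{2v}, \quad \lambda := e^{2c} > 0,
\]
so the problem becomes: find $v \in H^k(\hn)$ and $\lambda > 0$ solving this rescaled equation. Such $(v,\lambda)$ arise naturally as Lagrange-multiplier pairs for a constrained variational problem, or directly as critical points of a functional with a free multiplicative parameter in the nonlinearity.

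The first task is to make sense of the nonlinear term on $H^k(\hn)$. Consider the reduced functional
\[
\tilde J(v) := \frac{1}{2}\|v\|_{k,g}^2 + \int_{\hn} Q_1 v\, dv_g - \frac{\lambda}{2}\int_{\hn} Q_2\,(e^{2v}-1)\, dv_g.
\]
The linear term is controlled by Cauchy--Schwarz using $Q_1 \in L^2$ and the higher-order Poincare inequality of Lemma \ref{higpoi}. For the exponential term, I would write $|e^{2v}-1| \le 2|v|\,e^{2|v|}$, apply the elementary bound $4|v| \le \epsilon v^2 + C_\epsilon$, and deduce via Cauchy--Schwarz against $Q_2 \in L^2$ that $\int Q_2(e^{2v}-1)\,dv_g$ is finite, with a norm-controlled estimate using the Adams inequality (Theorem \ref{HYADA}) on norm-balls of $H^k(\hn)$. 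This is precisely where the main result of the paper enters. A similar argument shows $\tilde J \in C^1(H^k(\hn))$ with Euler-Lagrange equation $P_k v + Q_1 = \lambda Q_2 e^{2v}$.

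To produce a critical point, I would first attempt direct minimization; if the exponential term makes $\tilde J$ unbounded below, a Mountain Pass argument applies instead. The quadratic term dominates near the origin (the Adams-controlled nonlinearity is quadratic to leading order at $v=0$), giving a strict local minimum modulo the linear perturbation, while for a suitable test function concentrated where $Q_2 > 0$ the functional tends to $-\infty$; an analogous argument handles $\{Q_2 < 0\}$. Once a critical point $v_\ast$ is obtained for a suitable $\lambda > 0$, the original PDE is solved by $u = v_\ast + \tfrac{1}{2}\log\lambda \in H^k(\hn) + \mathbb{R}$.

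The main obstacle I expect is compactness of Palais--Smale sequences on the non-compact space $\hn$. A bounded PS sequence $\{v_n\} \subset H^k(\hn)$ enjoys uniformly integrable $e^{\beta v_n^2}$ via the Adams inequality, but mass could escape to the boundary of $\bn$ along hyperbolic translations, which preserve both $\|\cdot\|_{k,g}$ and $P_k$ (Lemma \ref{properties of P_k}). The saving feature is that the data $Q_1, Q_2 \in L^2$ are \emph{not} translation-invariant: a translated subsequence would decouple from the data so that the linear and exponential terms of $\tilde J(v_n)$ vanish in the limit, leaving only the quadratic term---contradicting the PS property. A concentration--compactness argument, combined with Adams to pass to the limit in the nonlinear term, then produces a non-trivial critical point, yielding the desired solution.
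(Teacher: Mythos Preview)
Your instinct to go variational is right, but the functional you wrote down is not quite suited to the task, and the paper's choice is cleaner. You fix a parameter $\lambda$ in
\[
\tilde J(v) = \tfrac12\|v\|_{k,g}^2 + \int_{\hn} Q_1 v\,dv_g - \tfrac{\lambda}{2}\int_{\hn} Q_2(e^{2v}-1)\,dv_g,
\]
and then say $\lambda$ is a Lagrange multiplier --- but with $\lambda$ already fixed there is no constraint and no multiplier; you would have to find a critical point for \emph{some} $\lambda>0$ and you never say how that $\lambda$ is selected. The paper instead works on the open set $\mathcal{O}=\{u:\int Q_2(e^{2u}-1)\,dv_g>0\}$ with
\[
J_Q(u)=\|u\|_{k,g}^2 + 2\int_{\hn} Q_1 u\,dv_g - \log\int_{\hn} Q_2(e^{2u}-1)\,dv_g,
\]
whose critical points automatically solve $P_k u + Q_1 = \mu\,Q_2 e^{2u}$ with $\mu = \bigl(\int Q_2(e^{2u}-1)\bigr)^{-1}>0$; then $u-\tfrac12\log\mu^{-1}\in H^k(\hn)+\mathbb{R}$ solves the original equation. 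The logarithm is the point: combined with the \emph{linearised} Adams inequality (Lemma~\ref{estimate on Spk2}), namely
\[
\log\int_{\hn}(e^{2u}-1)^2\,dv_g \le C(\delta) + \tfrac{1}{\beta_0\delta}\|u\|_{k,g}^2,
\]
it makes $J_Q$ coercive, so direct minimisation succeeds and no Mountain Pass is needed. Your $\tilde J$ has no such coercivity once $Q_2>0$ somewhere, which is why you are pushed toward a Mountain Pass scheme whose Palais--Smale analysis you only sketch.

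On compactness you correctly see that $Q_1,Q_2\in L^2(\hn)$ is what prevents loss of mass by hyperbolic translation, but concentration--compactness is heavier than required. The paper proves directly (Lemma~\ref{estimate on Spk3}) that $u\mapsto\int Q(e^{u}-1)\,dv_g$ is weakly sequentially continuous on $H^k(\hn)$: Adams gives $\sup_m\int(e^{u_m}-1)^2\,dv_g<\infty$ on bounded sequences, and then the $L^2$ decay of $Q$ plus Vitali handles both the tail and the interior. With this weak continuity and the coercivity above, the minimiser exists by the direct method --- no PS sequences, no splitting, no profile decomposition.
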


The assumption of the above theorem is bit restrictive from a geometric point of view as the $Q_{\frac{N}{2}}$ curvature of $\hn$ is a constant and hence not in $L^2.$ The following theorem covers this case.
\begin{thm} \label{existence result thm 2}
 Suppose $Q_1- Q_2 \in L^2(\hn)$ and $Q_2 \le 0$ then the equation \eqref{Qk curvature 2} has a solution in $H^{k}(\hn).$
\end{thm}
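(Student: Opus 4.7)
I would use the direct method of the calculus of variations. Setting $f := Q_1 - Q_2 \in L^2(\hn)$, the equation \eqref{Qk curvature 2} becomes $P_k u + f = Q_2(e^{2u}-1)$, which is the Euler-Lagrange equation of
\begin{equation*}
J(u) := \tfrac{1}{2}\|u\|_{k,g}^2 + \int_{\hn} f u \, dv_g + \tfrac{1}{2}\int_{\hn}(-Q_2)\bigl(e^{2u}-1-2u\bigr)\, dv_g,
\end{equation*}
viewed as a map $H^k(\hn) \to (-\infty,+\infty]$. The sign hypothesis $Q_2 \le 0$ is decisive: the convex function $t \mapsto e^{2t}-1-2t$ is non-negative with minimum $0$ at $t=0$, and $-Q_2 \ge 0$, so the last integrand is non-negative. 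Thus $J$ is well-defined and convex, with $J(0)=0$, so $\inf J \le 0$.

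\textbf{Existence of a minimizer.} From Lemma \ref{higpoi} together with the norm equivalence \eqref{equivalent}, I have $\|u\|_{L^2(\hn)} \le C\|u\|_{k,g}$, and Cauchy-Schwarz gives $J(u) \ge \tfrac{1}{2}\|u\|_{k,g}^2 - C\|f\|_{L^2}\|u\|_{k,g}$, so $J$ is coercive. A minimizing sequence $\{u_n\}$ is then bounded in $H^k(\hn)$; passing to a subsequence I may assume $u_n \rightharpoonup u^\ast$ weakly in $H^k(\hn)$ and $u_n \to u^\ast$ almost everywhere in $\hn$, using the local Rellich compactness on an exhaustion of $\hn$ by bounded sets. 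The first term of $J$ is weakly lower semi-continuous, the linear term is weakly continuous because $f \in L^2 \hookrightarrow (H^k(\hn))^{\ast}$, and Fatou's lemma applied to the non-negative integrand $(-Q_2)(e^{2u_n}-1-2u_n)$ along the a.e.\ convergent subsequence handles the exponential term. Therefore $J(u^\ast) \le \liminf_n J(u_n) = \inf J$, so $u^\ast$ is a minimizer.

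\textbf{Euler-Lagrange equation.} From $J(u^\ast) \le 0$ I extract the free integrability $\int_{\hn}(-Q_2)(e^{2u^\ast}-1-2u^\ast)\, dv_g < \infty$. For $v \in \Chn$ with compact support $K$ and $|t| \le 1$, the mean value theorem bounds the difference quotient of $(-Q_2)\bigl(e^{2(u^\ast+tv)}-1-2(u^\ast+tv)\bigr)$ pointwise by $C_v \cdot (-Q_2)(e^{2u^\ast}+1)\chi_K$, and the elementary inequality $e^{2u^\ast}\le 1+2|u^\ast|+(e^{2u^\ast}-1-2u^\ast)$ reduces $L^1$-domination to $(-Q_2)(1+|u^\ast|)\chi_K \in L^1$. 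Assuming the mild local integrability of $Q_2$ implicit in the problem, dominated convergence supplies the Gateaux derivative of $J$ at $u^\ast$, and minimality yields
\begin{equation*}
\int_{\hn}(P_k u^\ast)v\, dv_g + \int_{\hn} f v\, dv_g - \int_{\hn} Q_2(e^{2u^\ast}-1)v\, dv_g = 0,
\end{equation*}
which rearranges to $P_k u^\ast + Q_1 = Q_2 e^{2u^\ast}$ in the distributional sense.

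\textbf{Main obstacle.} The delicate step is precisely this last differentiation, since $Q_2$ is only assumed measurable and $\le 0$, not in any $L^p$. The argument hinges on combining two pieces of ``free'' information: the positivity $-Q_2 \ge 0$, which makes the convex integrand non-negative and lets Fatou work in Step 2, and the a posteriori bound $\int (-Q_2)(e^{2u^\ast}-1-2u^\ast)<\infty$ coming from $u^\ast$ being a minimizer, which supplies an $L^1$ dominator on compact sets. A conceptually cleaner alternative, which avoids any delicate domination, is to invoke convex subdifferential calculus on the integral functional and identify $\partial J(u^\ast)$ directly via $0 \in \partial J(u^\ast)$.
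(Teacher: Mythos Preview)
Your approach is essentially identical to the paper's: the same functional $J$, the same coercivity via the embedding $H^k(\hn)\hookrightarrow L^2(\hn)$, Fatou's lemma for the nonnegative term $(-Q_2)(e^{2u}-2u-1)$, and the direct method to produce a minimizer whose Euler--Lagrange equation is \eqref{Qk curvature 2}. The paper glosses over the differentiation step you flag as the main obstacle, simply asserting ``one can easily check that $J(\tilde u + tv) < +\infty$ for all $v \in \Chn$''; your explicit identification of the local-integrability caveat on $Q_2$ is in fact more careful than the paper itself.
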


Under the assumptions of the theorem, the above PDE \eqref{Qk curvature 2} has a variational structure, more precisely we may expect solutions of the above PDE as critical points of the functional
 \begin{align}
  J_Q(u) = \int_{\hn} P_k (u)u \ dv_g + 2 \int_{\hn} Q_1 u \ dv_g - \log \int_{\hn} Q_2(e^{2u} - 1) \ dv_g,
 \end{align}
in an appropriate function space. For this purpose we need a linearised form of the Adams inequality.

\begin{lem} \label{estimate on Spk2}
 Let $\delta \in (0,1)$, then there exists a constant $C(\delta) > 0$ such that the inequality 
 \begin{align}
  \log \int_{\hn} (e^u - 1)^2 \ dv_g \leq \log \int_{\hn} (e^{2u} - 2u - 1) \ dv_g \notag 
  \leq C(\delta) + \frac{1}{\beta_0 \delta} \int_{\hn} P_k (u)u \ dv_g.
 \end{align}
holds for all $u \in H^{k}(\hn)$.
\end{lem}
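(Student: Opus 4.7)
The left inequality is immediate from the elementary pointwise bound $e^u \geq 1 + u$: indeed $(e^u-1)^2 = e^{2u} - 2e^u + 1 \leq e^{2u} - 2(1+u) + 1 = e^{2u} - 2u - 1$, whence the comparison after integrating and taking logarithms. For the right inequality, write $s := \|u\|_{k,g}$; I may assume $s > 0$ (otherwise the left side is $-\infty$) and set $v := u/s$, so $\|v\|_{k,g} = 1$.

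The plan is a two-step estimate: a pointwise Taylor control followed by a uniform weighted Adams bound. For any real $u$, the expansion $e^{2u} - 2u - 1 = \sum_{m \geq 2}(2u)^m/m!$ satisfies
$$0 \leq e^{2u} - 2u - 1 \leq 2 u^2 e^{2|u|},$$
using $(m+2)! \geq 2\cdot m!$ to bound the tail by $2u^2 \sum_{m \geq 0}(2|u|)^m/m! = 2u^2 e^{2|u|}$. Fix an auxiliary $\tilde\delta \in (\delta,1)$ and apply Young's inequality
$$2|u| \leq \frac{\beta_0 \tilde\delta\, u^2}{s^2} + \frac{s^2}{\beta_0 \tilde\delta},$$
so that exponentiating and integrating yields
$$\int_{\hn}(e^{2u}-2u-1)\,dv_g \;\leq\; 2 s^2 \exp\!\left(\frac{s^2}{\beta_0\tilde\delta}\right) \int_{\hn} v^2 e^{\beta_0 \tilde\delta\, v^2}\,dv_g.$$

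The main obstacle is establishing the uniform bound $\int_{\hn} v^2 e^{\beta_0 \tilde\delta v^2}\,dv_g \leq K(\tilde\delta)$ over $\|v\|_{k,g} = 1$; this is what prevents the estimate from leaking through the infinite volume of $\hn$. I split as $\int v^2\,dv_g + \int v^2(e^{\beta_0\tilde\delta v^2} - 1)\,dv_g$. The first summand is $\leq C$ by the higher-order Poincar\'e inequality (Lemma \ref{higpoi}) combined with the norm equivalence \eqref{equivalent}. For the second, pick $\tilde\delta' \in (\tilde\delta,1)$ and exploit the elementary inequality $\sigma(e^\tau - 1) \leq e^{\sigma+\tau} - 1$ for $\sigma,\tau \geq 0$ (one derivative check in $\sigma$ verifies it) with $\sigma = \beta_0(\tilde\delta'-\tilde\delta) v^2$ and $\tau = \beta_0\tilde\delta v^2$ to get
$$v^2\bigl(e^{\beta_0 \tilde\delta v^2} - 1\bigr) \;\leq\; \frac{1}{\beta_0(\tilde\delta'-\tilde\delta)}\bigl(e^{\beta_0 \tilde\delta'\, v^2} - 1\bigr),$$
which is uniformly integrable by Adams' inequality (Theorem \ref{HYADA}, since $\beta_0\tilde\delta' < \beta_0$).

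Assembling the pieces,
$$\log \int_{\hn}(e^{2u}-2u-1)\,dv_g \leq \log\bigl(2 K(\tilde\delta)\bigr) + \log s^2 + \frac{s^2}{\beta_0 \tilde\delta}.$$
The apparently extraneous $\log s^2$ is absorbed by choosing $\tilde\delta \in (\delta,1)$ (for instance $\tilde\delta = (1+\delta)/2$) and using the elementary bound $\log s^2 \leq \bigl(\tfrac{1}{\beta_0 \delta} - \tfrac{1}{\beta_0 \tilde\delta}\bigr) s^2 + C_\delta'$ on $s > 0$, valid precisely because $\tilde\delta > \delta$ forces the coefficient of $s^2$ to be strictly positive. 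This delivers the desired form $C(\delta) + s^2/(\beta_0 \delta)$ and finishes the proof. The weighted uniform Adams estimate in step three is the heart of the argument; everything surrounding it is routine Young's-inequality bookkeeping.
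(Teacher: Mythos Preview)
Your proof is correct and follows a genuinely different route from the paper's. The paper expands $e^u-u-1$ as a Taylor series $\sum_{p\ge 2} u^p/p!$, bounds each $L^p$-moment via the best Sobolev constant $S_{p,k}$, and then applies a Cauchy--Schwarz inequality on the resulting series; convergence of the auxiliary series $\sum_p \frac{1}{p!}\bigl(\frac{2\beta_0\delta}{S_{p,k}}\bigr)^p$ is obtained from the precise asymptotic $pS_{p,k}\to 2\beta_0 e$ of Theorem~\ref{estimate on Spk}. Your argument bypasses Theorem~\ref{estimate on Spk} entirely: after the pointwise Taylor bound $e^{2u}-2u-1\le 2u^2e^{2|u|}$ and a Young splitting, the problem reduces to the uniform subcritical estimate $\int v^2 e^{\beta_0\tilde\delta v^2}\,dv_g\le K(\tilde\delta)$ on the unit sphere, which you obtain directly from the Adams inequality of Theorem~\ref{HYADA} (at exponent $\beta_0\tilde\delta'<\beta_0$) together with Poincar\'e. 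The paper's approach has the virtue of making explicit how the sharp constant $2\beta_0 e$ in the asymptotics of $S_{p,k}$ governs the admissible range $\delta<1$; your approach is more self-contained, using only Theorem~\ref{HYADA} and elementary calculus, and in particular does not require the asymptotic analysis of best constants.
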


\begin{proof}
 Fix $\delta \in (0,1),$ then by Taylor expansion and Cauchy-Schwartz inequality we have,
 \begin{align*}
  \int_{\hn} (e^u - u - 1) \ dv_g &= \sum_{p = 2}^{\infty}\int_{\hn} \frac{1}{p!} u^p \ dv_g \\
   &\leq \sum_{p = 2}^{\infty} \frac{1}{\sqrt{p!}} \left[\frac{\int_{\hn} P_k (u)u \ dv_g}{2 \beta_0 \delta}\right]^{\frac{p}{2}}
   \frac{1}{\sqrt{p!}} \left[\frac{2 \beta_0 \delta}{S_{p,k}}\right]^{\frac{p}{2}} \\
   &\leq 
   \left[\sum_{p = 2}^{\infty} \frac{1}{p!} \left(\frac{\int_{\hn} P_k (u)u \ dv_g}{2 \beta_0 \delta}\right)^p\right]^{\frac{1}{2}}
  \left[\sum_{p = 2}^{\infty} \frac{1}{p!} \left(\frac{2 \beta_0 \delta}{S_{p,k}}\right)^p\right]^{\frac{1}{2}}.
  \end{align*}
Now by Lemma \ref{estimate on Spk} and Stirling formula we see that 
$\limsup \frac{1}{(p!)^{\frac{1}{p}}}\frac{2 \beta_0 \delta}{S_{p,k}} \leq \delta < 1.$ Hence we have,
\begin{align} \label{bound on functional 1}
 \int_{\hn} (e^u - u - 1) \ dv_g \leq c(\delta)\left[e^{\frac{\int_{\hn} P_k (u)u \ dv_g}{2 \beta_0 \delta}}
 - \frac{\int_{\hn} P_k (u)u \ dv_g}{2 \beta_0 \delta} - 1\right]^{\frac{1}{2}}.
\end{align}
Therefore applying \eqref{bound on functional 1} to $2u,$ we get,
\begin{align} \label{bound on functional 2}
 \int_{\hn} (e^{2u} - 2u - 1) \ dv_g \leq c(\delta)e^{\frac{\int_{\hn} P_k (u)u \ dv_g}{\beta_0 \delta}}.
\end{align}
Now using \eqref{bound on functional 2} and the inequality $(e^t - 1)^2 \leq (e^{2t} - 2t - 1)$ for 
all $t \in \mathbb{R},$ we get
\begin{align*}
\log \int_{\hn} (e^u - 1)^2 \ dv_g \leq \log \int_{\hn} (e^{2u} - 2u - 1) \ dv_g \notag 
  \leq C(\delta) + \frac{1}{\beta_0 \delta} \int_{\hn} P_k (u)u \ dv_g. 
\end{align*}
This completes the proof of the lemma.
\end{proof}
Proof of Theorem \ref{existence result thm 1} relies on the basic variational techniques. We need the following lemma before proceeding to the proof.
\begin{lem} \label{estimate on Spk3}
 Let $Q \in L^2(\hn),$ then the functional, $I_Q(u) = \int_{\hn} Q(e^u - 1) \ dv_g$ is uniformly
 continuous on bounded subsets of $H^{k}(\hn).$ Moreover, $I_Q$ is weakly continuous, that is,
 \begin{align*}
  u_m \rightharpoonup u \ \mbox{in} \ H^{k}(\hn) \ \ \ \ \mbox{implies} \ \ \ I_Q(u_m) \rightarrow I_Q(u).
 \end{align*}
\end{lem}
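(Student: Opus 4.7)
The plan is to control $|I_Q(u)-I_Q(v)|$ by $\|Q\|_{L^2(\hn)}\,\|e^u-e^v\|_{L^2(\hn)}$ via Cauchy--Schwarz and then to split the $L^2$-estimate into a \emph{tail} piece outside a large compact set $K\subset \hn$ and a \emph{local} piece inside $K$. The main obstacle is that the infinite volume of $\hn$ prevents one from working with $\int_\hn e^{pu}\,dv_g$ directly; the correct substitute everywhere is $e^u-1$, which by Lemma \ref{estimate on Spk2} lies in $L^2(\hn)$ with norm uniformly bounded on bounded subsets of $H^k(\hn)$.

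For the tail one has
\[
\int_{\hn\setminus K}|Q|\,|e^u-e^v|\,dv_g \le \|Q\|_{L^2(\hn\setminus K)}\bigl(\|e^u-1\|_{L^2(\hn)}+\|e^v-1\|_{L^2(\hn)}\bigr),
\]
so Lemma \ref{estimate on Spk2} bounds the second factor by a constant $C(R)$ on $\{\|u\|_{k,g}\le R\}$, while absolute continuity of $\int Q^2\,dv_g$ makes $\|Q\|_{L^2(\hn\setminus K)}$ arbitrarily small once $K$ is large. For the local piece I would use the pointwise mean-value inequality $|e^u-e^v|\le |u-v|(e^{|u|}+e^{|v|})$ and Cauchy--Schwarz to obtain
\[
\|e^u-e^v\|_{L^2(K)}^2 \le 2\,\|u-v\|_{L^4(K)}^2\bigl(\|e^{|u|}\|_{L^4(K)}^2+\|e^{|v|}\|_{L^4(K)}^2\bigr).
\]
Lemma \ref{basicl2} converts the hyperbolic Sobolev norm of $u-v$ on a Euclidean neighbourhood of $K$ into a Euclidean $H^k$-norm, and the critical Sobolev embedding $H^k\hookrightarrow L^4$ on that bounded domain then bounds $\|u-v\|_{L^4(K)}$ by $C_K\,\|u-v\|_{H^k(\hn)}$. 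For the exponential factor I would cover $K$ by finitely many hyperbolic translates $\tau_{b_i}(V)$ of a set $V\Subset U$ from Lemma \ref{basicl3}; using isometry invariance (Lemma \ref{lemma1}), a rescaling of $u\circ \tau_{b_i}$ in Lemma \ref{basicl3}, and the elementary bound $4|t|\le \alpha t^2+4/\alpha$, one obtains a uniform estimate $\int_{\tau_{b_i}(V)}e^{4|u|}\,dv_g\le C_{K,R}$ on $\{\|u\|_{H^k(\hn)}\le R\}$. Choosing first $K$ large for the tail and then $\|u-v\|_{H^k}$ small for the local piece yields uniform continuity on bounded subsets.

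The weak continuity follows from the same decomposition. If $u_m\rightharpoonup u$ in $H^k(\hn)$ then $\{u_m\}$ is $H^k$-bounded, so the tail is uniformly small exactly as above. On the compact set $K$ the Rellich--Kondrachov theorem, applied on a Euclidean neighbourhood of $K$ via the norm equivalence in Lemma \ref{basicl2}, gives $u_m\to u$ strongly in $L^4(K)$ along a subsequence; combined with the uniform bound on $\|e^{|u_m|}\|_{L^4(K)}$ this forces $\|e^{u_m}-e^u\|_{L^2(K)}\to 0$ on that subsequence. Since every subsequence contains a further subsequence along which $I_Q(u_m)\to I_Q(u)$, the full sequence converges, which completes the proof.
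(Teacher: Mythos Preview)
Your argument is correct, but the paper proceeds differently for the first part. The paper does \emph{not} split into a tail and a local piece for uniform continuity; instead it works globally via the algebraic identity
\[
e^{u}-e^{v}=(e^{u-v}-1)(e^{v}-1)+(e^{u-v}-1),
\]
applies Cauchy--Schwarz and H\"older, and then invokes the global estimate of Lemma~\ref{estimate on Spk2} (and its precursor, inequality~\eqref{bound on functional 1}) to bound each factor $\int(e^{aw}-1)^{2}\,dv_{g}$ directly in terms of $\|w\|_{k,g}$. This yields the quantitative estimate $|I_{Q}(u)-I_{Q}(v)|\le C(Q,C_{0},\delta)\,\|u-v\|_{k,g}^{1/2}$ on bounded sets, with no choice of compact $K$ required. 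Your route---tail control via $\|Q\|_{L^{2}(\hn\setminus K)}\to 0$ plus a local mean-value estimate supported by Lemmas~\ref{basicl2} and~\ref{basicl3}---is also valid, but it trades the clean global H\"older modulus for more machinery (local norm comparison, finite covers by hyperbolic translates, Young's inequality on $4|t|$). For the weak continuity the two proofs are closer: both isolate the tail using $Q\in L^{2}$, but on the compact piece the paper appeals to Vitali's convergence theorem (uniform $L^{2}$-integrability of $e^{u_{m}}-1$ from Lemma~\ref{estimate on Spk2}), whereas you use Rellich--Kondrachov through Lemma~\ref{basicl2} together with your local $L^{4}$ bound on $e^{|u_{m}|}$. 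Both arguments are sound; the paper's is shorter because the heavy lifting is already encapsulated in Lemma~\ref{estimate on Spk2}.
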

 \begin{proof}
  Let $u,v \in H^{k}(\hn)$ be such that,
  \begin{align*}
   \int_{\hn} P_k (u) u \ dv_g + \int_{\hn} P_k (v)v \ dv_g \leq C_0,
  \end{align*}
then using the inequality $(e^{t} - 1)^2 \leq |e^{2t} - 1|,$ and \eqref{bound on functional 1} we have,
\begin{align*}
 |I_Q(u) - I_Q(v)| &\leq \left(\int_{\hn} |Q|^2 \ dv_g \right)^{\frac{1}{2}}
 \left(\int_{\hn} |e^u - e^v|^2 \ dv_g \right)^{\frac{1}{2}} \\
 &= \left(\int_{\hn} |Q|^2 \ dv_g \right)^{\frac{1}{2}}
 \left(\int_{\hn} |(e^{u - v} - 1)(e^v - 1) + (e^{u - v} - 1)|^2 \ dv_g \right)^{\frac{1}{2}} \\
 &\leq C(Q) \left[\left(\int_{\hn} (e^{2v} - 1)^2\right)^{\frac{1}{4}} 
 \left(\int_{\hn} (e^{2(u - v)} - 1)^2 \ dv_g\right)^{\frac{1}{4}} \right. \\
 &\left. + \left(\int_{\hn} (e^{u - v} - 1)^2 \ dv_g\right)^{\frac{1}{2}}\right] \\
 &\leq C(Q,C_0,\delta) \left[\int_{\hn} P_k(u - v)(u - v) \ dv_g \right]^{\frac{1}{4}}.
\end{align*}
This proves the first part of the lemma.\\ To prove the second part, let $u_m \rightharpoonup u$ in $H^{k}(\hn).$
Then from $\sup_m \int_{\hn} P_k (u_m) u_m \ dv_g < \infty $ and Lemma \ref{estimate on Spk2} we see that
$\sup_m \int_{\hn} (e^{u_m} - 1)^2 \ dv_g < \infty.$ Let $\epsilon > 0$ be given, then using $Q \in L^2(\hn),$
we conclude that there exists a compact set $K$ such that,
\begin{align*}
 \left |\int_{\hn \backslash K} Q(e^{u_m} - e^{u}) \ dv_g \right| < \frac{\epsilon}{2}.
\end{align*}
Again using $\sup_m \int_{\hn} (e^{u_m} - 1)^2 \ dv_g < \infty$ and Vitali's convergence theorem we conclude
that,
\begin{align*}
 \int_{K} Q(e^{u_m} - 1) \ dv_g \rightarrow \int_{K} Q(e^{u} - 1) \ dv_g,
\end{align*}
and this completes the proof.
\end{proof}
\textbf{Proof of Theorem \ref{existence result thm 1}:} 
 Let us define $\mathcal{O} = \{u \in H^{k}(\hn) : \int_{\hn} Q_2(e^{2u} - 1) \ dv_g > 0\}.$ Then 
 $\mathcal{O}$ is an open subset of $H^{k}(\hn),$ thanks to Lemma \ref{estimate on Spk3}. Define,
 \begin{align}
  J_Q(u) = \int_{\hn} P_k (u)u \ dv_g + 2 \int_{\hn} Q_1 u \ dv_g - \log \int_{\hn} Q_2(e^{2u} - 1) \ dv_g,
 \end{align}
then $J_Q$ is well defined on $\mathcal{O}.$ We see that,
\begin{align} \label{Hardy like estimate}
 \left| \int_{\hn} Q_1 u \ dv_g \right| &\leq \left(\int_{\hn} Q^2_1 \ dv_g\right)^{\frac{1}{2}}
 \left(\int_{\hn} u^2 \ dv_g\right)^{\frac{1}{2}}, \notag \\
 &\leq c_0 \left(\int_{\hn} P_k (u)u \ dv_g\right)^{\frac{1}{2}},
\end{align}
and
\begin{align*}
 \int_{\hn} Q_2(e^{2u} - 1) \ dv_g &\leq \left( \int_{\hn} Q^2_2 \ dv_g \right)^{\frac{1}{2}}
 \left(\int_{\hn} (e^{2u} - 1)^2 \ dv_g \right)^{\frac{1}{2}}.
\end{align*}
Therefore taking logarithm and using lemma\eqref{estimate on Spk2} we get,
\begin{align} \label{other estimate}
 \log \int_{\hn} Q_2(e^{2u} - 1) \leq c_1 + \frac{2}{\beta_0 \delta} \int_{\hn} P_k (u)u \ dv_g.
\end{align}
From \eqref{Hardy like estimate} and \eqref{other estimate} we get,
\begin{align}
 J_Q(u) \geq \left(\int_{\hn} P_k (u)u \ dv_g \right)^{\frac{1}{2}}
 \left[(1 - \frac{2}{\beta_0 \delta}) \left(\int_{\hn} P_k (u)u \ dv_g \right)^{\frac{1}{2}} - c_0\right] - c_1.
\end{align}
This proves $J_Q$ is bounded from below and coercive. Let $u_m$ be sequence on $\mathcal{O}$ such that 
$J_Q(u_m) \rightarrow \inf_{u \in \mathcal{O}} J_Q(u).$ Since $J_Q$ is coercive, we can assume $u_m$ is a bounded 
sequence in $H^{k}(\hn)$ and hence $u_m \rightharpoonup u_0$ in $H^{k}(\hn).$ Clearly $u_0 \in \mathcal{O},$
otherwise $J_Q$ would become infinity, and by Lemma \ref{estimate on Spk3} we conclude that 
$J(u_0) = \inf_{u \in \mathcal{O}} J_Q(u).$

Since $\mathcal{O}$ is open, we have for all $v \in \Chn,$
\begin{align}
 \int_{\hn} P_k (u_0)v \ dv_g + \int_{\hn} Q_1 v 
 - \frac{\int_{\hn} Q_2 e^{2u_0}v}{\int_{\hn} Q_2(e^{2u_0} - 1) \ dv_g} = 0.
\end{align}
Hence, $u_0 - \frac{1}{2} \log \int_{\hn} Q_2(e^{2u_0} - 1) \ dv_g$ is a solution to the problem \eqref{Qk curvature 2}.
 \hfill $\Box$ \\\\
{\bf Proof of Theorem \ref{existence result thm 2}:}
Let us consider the following functional :
\begin{align}
 J(u) = \frac{1}{2}\int_{\hn} (P_k u)u \ dv_g - \int_{\hn} Q u \ dv_g - 
 \frac{1}{2} \int_{\hn} Q_2(e^{2u} - 2u - 1) \ dv_g,
\end{align}
where $Q = (Q_2 - Q_1),$ then $J$ is well defined on $H^k(\hn),$ and solutions of the PDE \eqref{Qk curvature 2} 
can be obtained by finding it's critical points. Since $Q_2 \leq 0$ on $\hn$ and $(e^{2t} - 2t - 1) \geq 0$ 
for all $t \in \mathbb{R},$ we have the following
coercivity estimate:
\begin{align}
 J(u) &\geq \int_{\hn} (P_k u) u \ dv_g - \left(\int_{\hn} Q^2 \ dv_g \right)^{\frac{1}{2}}\left(\int_{\hn} u^2 \ dv_g \right)^{\frac{1}{2}} \notag \\
 &\geq \int_{\hn} (P_k u) u \ dv_g - \frac{1}{\Theta}
 \left(\int_{\hn} Q^2 \ dv_g \right)^{\frac{1}{2}}\left(\int_{\hn} (P_k u) u  \ dv_g \right)^{\frac{1}{2}}.
\end{align}
Therefore $J$ is a convex and coercive functional in $H^k(\hn).$ Since 
$\int_{\hn} (-Q_2)(e^{2u} - 2u - 1) \ dv_g \geq 0,$ for all $u \in H^k(\hn),$ by Fatou's lemma $J$ is
weakly sequentially lower semicontinuous in $H^k(\hn)$. Hence by direct method in the calculus of variations,
$J$ attains its infimum in $H^k(\hn).$ Let $\tilde u \in H^k(\hn)$ be such that $J(\tilde u) = \inf J(u),$ then
one can easily check that $J(\tilde u + t v) < + \infty,$ for all $v \in \Chn,$ and therefore
\begin{align}
 0 = \frac{d}{dt} J(\tilde u + t v)|_{t = 0 } &=
 \int_{\hn} (P_k u)v \ dv_g  - \int_{\hn} Q v \ dv_g - \int_{\hn} Q_2(e^{2 \tilde u - 1})v \ dv_g \notag \\
 &= \int_{\hn} (P_k u)v \ dv_g + \int_{\hn} \left[Q_1 - Q_2 e^{\tilde u}\right] v \ dv_g.
\end{align}
This proves $\tilde u$ solves \eqref{Qk curvature 2}. 
\hfill $\Box$

\end{document}